\documentclass[final,reqno]{amsart}

\usepackage{graphicx}
\usepackage{amsmath,amsthm,amsfonts,amscd,amssymb}
\usepackage[color,notref,notcite]{showkeys}
\usepackage{url}
\usepackage{subeqnarray}
\usepackage[color,notref,notcite]{showkeys} 
\definecolor{labelkey}{rgb}{0.6,0,1}
\usepackage{enumitem}
\usepackage{algorithm}
\usepackage{algpseudocode}
\usepackage{citesort}


\theoremstyle{plain}
\newtheorem{theorem}{Theorem}[section]

\newtheorem{lemma}[theorem]{Lemma}



\newtheorem{assumptions}[theorem]{Assumptions}


\theoremstyle{definition}
\newtheorem{definition}[theorem]{Definition}

\def\bhyp#1{\begin{equation}\label{#1}\begin{array}{c}}
\def\ehyp{\end{array}\end{equation}}

\newcounter{cst}

\theoremstyle{remark}
\newtheorem{remark}[theorem]{Remark}

\numberwithin{equation}{section}
\numberwithin{figure}{section}

\newcommand{\RR}{{\mathbb R}}
\newcommand{\NN}{{\mathbb N}}

\def\O{\Omega}
\def\dsp{\displaystyle}

\def\disc{{\mathcal D}}
\def\mesh{{\mathcal M}}


\def\dr{\partial}

\newcommand{\bw}{{\bf w}}

\def\cT{\mathbb{T}}

\newcommand{\x}{\pmb{x}}


\newif\ifcorr\corrtrue
\usepackage[normalem]{ulem}
\normalem
\definecolor{violet}{rgb}{0.580,0.,0.827}


\def\bo{{\boldsymbol \omega}}

\def\bpsi{{\boldsymbol \psi}}
\def\bphi{{\boldsymbol \varphi}}


\newcommand{\ud}{\, \mathrm{d}} 
\def\div{\mathop{\rm div}}


\title[Error Estimates of Approximation of SRDE{s}]{General Error Estimates of Non Conforming Approximation of System of Reaction-Diffusion Equations}

\author{Yahya Alnashri}
\address[Yahya Alnashri]{Department of Mathematics, Al-Qunfudah University College, Umm Al-Qura University, Saudi Arabia}
\email{yanashri@uqu.edu.sa}

\subjclass[2010]{35K57, 41A25, 65M08, 35Axx, 35A35}
\keywords{System of reaction-diffusion equations (SRDEs), non-linear problems, anisotropic tensors, Brusselator system, gradient discretisation method (GDM), rate of convergence, finite volume schemes, non-conforming schemes.}
\date{\today}

\begin{document}
\newcommand{\subscript}[2]{$#1 _ #2$}

\begin{abstract}
This paper aims to establish a first general error estimate for numerical approximations of the system of reaction-diffusion equations (SRDEs), using reasonable regularity assumptions on the exact solutions. We employ the gradient discretisation method (GDM) to discretise the system and prove the existence and uniqueness of the approximate solutions. The analysis provided here is not limited to specific reaction functions, and it is applicable to all conforming and non-conforming schemes fitting within the GDM framework. As an application, we present numerical results based on a finite volume method.

\end{abstract}

\maketitle

\section{Introduction}

A variety of issues in physics, biology, chemistry, and elasticity are described using a system of reaction-diffusion equations (SRDEs). For instance, in \cite{2,4}, it simulates the interaction between chemical substances. In biological phenomena, a reaction-diffusion model is proposed to mimic the spread of infectious diseases and population growth \cite{16} and generate stem cells \cite{1}. FitzHugh-Nagumo equation \cite{17} and the Brusselator model \cite{5} are specific types of reaction-diffusion systems used to study electrical and physical problems.

Letting $\O\subset \RR^d$ ($d\geq 1$) be a polygonal bounded domain and $T>0$, we consider in this work the following system of partial differential equations coupled with generic reaction terms and anisotropic tensors:
\begin{subequations}\label{eq-rm}
\begin{align}
\partial_t \bar u(\x,t) -\div(\Lambda_1(\x)\nabla \bar u(\x,t))&=F_1(\bar u,\bar v), \quad (\x,t) \in \O\times (0,T),\label{rm-strong1}\\
\partial_t \bar v(\x,t) -\div(\Lambda_2(\x)\nabla \bar v(\x,t))&=F_2(\bar u,\bar v), \quad (\x,t) \in \O\times (0,T),\label{rm-strong2}\\
(\bar u(\x,t),\bar v(\x,t))&= (0,0), \quad (\x,t) \in\partial\O\times (0,T), \label{rm-strong3}\\
(\bar u(\x,0),\bar v(\x,0))&=(u_{\rm ini}(\x),v_{\rm ini}(\x)), \quad \x \in \O. \label{rm-strong5}
\end{align}
\end{subequations}

With particular choices of reaction terms $F_1$ and $F_2$, Equations \eqref{rm-strong1}--\eqref{rm-strong5} cover different mathematical models, such as FitzHugh-Nagumo equation \cite{15,28} and the Brusselator problem \cite{27}. For simplicity, we focus on Dirichlet homogeneous boundary conditions, though our analysis is adaptable to other boundary types conditions.

Mathematical studies corresponding to the existence and uniqueness of the solutions to SRDEs are available in different references \cite{18,19,20}. For the numerical aspect, reaction-diffusion systems have been approximated by several numerical methods. We cite finite difference \cite{8,9}, the spectral method \cite{10}, finte element method \cite{11,12}. A lumped surface ﬁnite element method is applied to such a system in \cite{7}, and it is shown that its convergence is a quadratic respect to mesh size under strong regularity on the continuous solutions and reaction terms. Extensions of the non-linear Galerkin method to approximate the considered model can be found in \cite{13}. To our best knowledge, most of the previous schemes are restricted to rectangular meshes and an isotropic diffusion ($\Lambda_1=\Lambda_2=I_d$). For polygonal meshes, we cite \cite{15} applies a finite volume method to FitzHugh-Nagumo equations, a specific type of SRDEs. In \cite{14}, the SRDEs are also discretised by a finite volume scheme, but no convergence analysis is carried out. We provide in \cite{6} a generic numerical analysis of Problem \eqref{eq-rm} with $\Lambda_1=\Lambda_2=I_d$, and only prove the convergence of the scheme up to a subsequence of the discrete solutions towards a weak continuous solution, but no convergence orders are derived.

While benchmarking is typically employed to evaluate the quality of numerical schemes, it is limited to certain specific cases. This study seeks to determine the convergence order of generic approximations of the system \eqref{eq-rm} according to relevant discrete norms. The error estimate provides a robust approach to assess the efficiency of numerical methods. To the best of our knowledge, this is the first work to present general error estimates for the approximation by non-conforming methods of the anisotropic SRDM. To achieve our results, we approximate the studied model in space using the Gradient Discretisation Method detailed in\cite{21}, an abstract framework that encompasses large families of both conforming and non-conforming schemes.

The organisation of this paper is structured as follows. In Section \ref{Sec-2}, we recall the weak problem and the definition of the gradient discretisation method. In Section \ref{sec-3}, we state and prove the main results (the error estimates). In addition to employing a similar technique as that in \cite{22}, the proof necessitates a novel approach to handling the system of equations and the general non-linear reaction functions. Section \ref{sec-num} focuses on a numerical test that validates our theoretical convergence order.

\section{Weak Formulation and Discretisation}\label{Sec-2}
Let us start by introducing the following assumptions on data.

\begin{assumptions}\label{assump-rm}
We assume that the model data satisfies the following conditions:
\begin{itemize}
\item $\O \subset \RR^d$ ($d=1,2.3$) with a $C^2\mbox{--regular}$ boundary $\dr\O$ and $T>0$,
\item $\Lambda_1,\Lambda_2:\O \to \mathbb A _d(\RR)$ are measurable functions, such that, for a.e. $\Lambda_1(\x)$ and $\Lambda_2(\x)$ are symmetric with eigenvalues in $[\underline\lambda_1,\overline\lambda_1]$ and $[\underline\lambda_2,\overline\lambda_2]$, respectively, where $\mathbb A _d(\RR)$ is the set of $d\times d$ matrices,
\item $(u_{\rm ini},v_{\rm ini})$ are in $H^1(\O) \times H^1(\O)$,
\item the functions $F_1,\; F_2: \RR^2 \to \RR$ are Lipschitz continuous with Lipschitz constants $L_1$ and $L_2$, respectively, where $L:=\max\{L_1,L_2\}$.
\end{itemize}
\end{assumptions}

Under the above assumptions, it is proved in \cite{20} that the problem \eqref{eq-rm} admits a unique pair of weak solutions as follows: there exists $(\bar u,\bar v) \in (L^2(0,T;H_0^1(\O)))^2$ such that $(\dr_t\bar u,\dr_t\bar v) \in (L^2(0,T;H^{-1}(\O)))^2$, and for all a.e. $t\in[0,T]$, the following equalities hold:
\begin{subequations}\label{rm-weak}
\begin{equation}\label{rm-weak1}
\begin{aligned}
\langle \dr_t\bar u(\cdot,t),\varphi(\cdot,t) \rangle  
&+\dsp\int_\O \Lambda_1(\x)\nabla \bar u(\x,t) \cdot \nabla \varphi(\x,t)\ud \x\\
{}&\quad=\dsp\int_\O F_1(\bar u,\bar v)\varphi(\x,t) \ud \x,
\quad \forall \varphi\in L^2(0,T;H_0^1(\O)),
\end{aligned}
\end{equation}
\begin{equation}\label{rm-weak2}
\begin{aligned}
\langle \dr_t\bar v(\cdot,t),\psi(\cdot,t) \rangle  
&+\dsp\int_\O \Lambda_2(\x)\nabla \bar v(\x,t) \cdot \nabla \psi(\x,t)\ud \x\\
{}&\quad=\dsp\int_\O F_2(\bar u,\bar v)\psi(\x,t) \ud \x,
\quad \forall \psi\in L^2(0,T;H_0^1(\O)),
\end{aligned}
\end{equation}
\end{subequations}
where $\langle \cdot,\cdot \rangle$ is the duality product between $H^1(\O)$ and its dual space $H^{-1}(\O)$.

Concerning the spatial space, we can approximate the problem \eqref{rm-weak} by the gradient discretisation method. The first step toward this approximation is introducing generic discrete spaces and operators as in the following definition. 
\begin{definition}[Generic Discrete Elements]\label{def-gd}
A space-time gradient discretisation $\disc$ is given by $\disc=(X_{\disc,0}, \Pi_\disc, \nabla_\disc, J_\disc)$, where
\begin{itemize}
\item $X_{\disc,0}$ is a finite-dimensional space on $\RR$, associated with the interior and boundary degrees of freedoms and taking into account the zero boundary conditions,
\item $\Pi_\disc : X_{\disc} \to L^2(\O)$ is a linear reconstructed function,
\item $\nabla_\disc : X_{\disc} \to L^2(\O)^d$ is a linear reconstructed gradient and must be defined so that $\| \cdot ||_\disc := || \nabla_\disc \cdot ||_{L^2(\O)^d}$ defines a norm on $X_{\disc,0}$,
\item $J_\disc: L^\infty(\O) \to X_\disc$ is a linear and continuous interpolation operator for the initial conditions. 
\end{itemize}
\end{definition}
The coercivity, the consistency and the dual consistency of the discretisation are respectively controlled by the parameter $C_\disc$, the interpolation error function $S_\disc : H_0^1(\O)\to [0, +\infty)$, and the conformity error function $W_\disc : H_{\rm{div}}(\O)= \{\bo \in L^{2}(\O)^{d}{\;:\;} {\rm div}\bo \in L^2(\O)\} \to [0, +\infty)$; they are defined by    
\begin{equation}\label{CD}
C_\disc =  {\dsp \max_{w \in X_{\disc,0}\setminus\{0\}}\frac{\|\Pi_\disc w\|_{L^2(\O)}}{\|\nabla_\disc w\|_{L^2(\O)^{d}}}},
\end{equation}
\begin{equation}\label{SD}
S_\disc(\varphi)=\min_{w\in X_{\disc,0}}\| \Pi_\disc w - \varphi \|_{L^2(\O)} 
+ \| \nabla_\disc w - \nabla \varphi \|_{L^2(\O)^{d}},\mbox{ and }
\end{equation}
\begin{equation}\label{WD}
W_\disc(\bphi)=
 \sup_{w\in X_{\disc,0}\setminus \{0\}}\frac{1}{\|\nabla_\disc w\|_{L^2(\O)^d}} \Big|\int_{\O}(\nabla_\disc w\cdot \bphi + \Pi_\disc w \cdot\mathrm{div} (\bphi)) \ud \x
\Big|.
\end{equation}
From the definition of $C_\disc$, we can derive the following discrete Poincar\'e inequality, for all $w\in X_{\disc,0}$,
\begin{equation}\label{ponc-enq}
\|\Pi_\disc w\|_{L^2(\O)} \leq C_\disc \|\nabla_\disc w\|_{L^2(\O)^d}.
\end{equation}

Using an implicit Euler scheme in time and the space-time gradient discretisation illustrated above, we can define a generic discrete scheme of the problem \eqref{rm-weak} called a gradient scheme. In what follows, let $t^{(0)}=0<t^{(1)}<....<t^{(N)}=T$ $(N\in\NN^\star)$ be a discrete times and we set the time step $\delta t^{(n+\frac{1}{2})}:=t^{(n+1)}-t^{(n)}$, $n\in\{0,...,N-1\}$.
\begin{definition}[Gradient Scheme]
If $\disc$ is a space-time gradient discretisation defined in Definition \ref{def-gd}, the corressponding gradient scheme for the problem \eqref{rm-weak} is seeking $(u^{(n)},v^{(n)})_{n=0,...,N} \in X_{\disc,0}^{N+1} \times X_{\disc,0}^{N+1}$ such that $(u^{(0)}, v^{(0)})=(J_\disc u_{\rm ini}, J_\disc v_{\rm ini})$ and for all $n=0,...,N-1$, $u^{(n+1)}$ and $v^{(n+1)}$ satisfy the following equalities

\begin{subequations}\label{rm-disc-pblm}
\begin{equation}\label{rm-disc-pblm1}
\begin{aligned}
&\frac{1}{\delta t^{(n+\frac{1}{2})}}\dsp\int_\O (\Pi_\disc u^{(n+1)}(\x)-\Pi_\disc u^{(n)}(\x)) \Pi_\disc \varphi(\x)
+\dsp\int_\O \Lambda_1(\x)\nabla_\disc u^{(n+1)}(\x) \cdot \nabla_\disc \varphi(\x)\ud \x\\
&\qquad=\dsp\int_\O F_1(\Pi_\disc u^{(n+1)}(\x), \Pi_{\disc} v^{(n+1)}(\x))\Pi_\disc \varphi(\x) \ud \x, \quad \forall \varphi \in X_{\disc,0},
\end{aligned}
\end{equation}
\begin{equation}\label{rm-disc-pblm2}
\begin{aligned}
&\frac{1}{\delta t^{(n+\frac{1}{2})}}\dsp\int_\O (\Pi_\disc v^{(n+1)}(\x)-\Pi_\disc v^{(n)}(\x)) \Pi_\disc \psi(\x)
+\dsp\int_\O \Lambda_2(\x)\nabla_\disc v^{(n+1)}(\x) \cdot \nabla_\disc \psi(\x)\ud \x\\
&\qquad=\dsp\int_\O F_2(\Pi_\disc u^{(n+1)}(\x), \Pi_{\disc} v^{(n+1)}(\x))\Pi_\disc \psi(\x) \ud \x, \quad \forall \psi \in X_{\disc,0}.
\end{aligned}
\end{equation}
\end{subequations}
\end{definition}

Let us now prove the existence and uniqueness of the solutions to the approximate scheme. The proof presented here is a generalisation of the one in \cite{24}, which is limited to a single equation.  

\begin{lemma}
Let Assumptions \ref{assump-rm} hold, and $\disc$ be a space-time gradient discretisation. Then for all $\delta t^{(n+\frac{1}{2})}\leq \frac{2}{L^2C_\disc^2(\underline\lambda_1+\underline\lambda_2)}$, the approximate scheme \eqref{rm-disc-pblm} has a unique solution. 
\end{lemma}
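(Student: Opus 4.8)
The plan is to argue by induction on the time index $n$. Since $(u^{(0)},v^{(0)})$ is prescribed, it suffices to show that, given $(u^{(n)},v^{(n)})\in X_{\disc,0}^2$, the coupled equations \eqref{rm-disc-pblm1}--\eqref{rm-disc-pblm2} determine a unique pair $(u^{(n+1)},v^{(n+1)})\in X_{\disc,0}^2$. Because the reaction terms are evaluated at the unknown $(u^{(n+1)},v^{(n+1)})$, this is a genuinely nonlinear finite-dimensional system. As $X_{\disc,0}$ is finite-dimensional, I would establish existence by a Brouwer-type topological argument and uniqueness by an energy estimate on the difference of two solutions, the time-step restriction entering in both places to guarantee that coercivity dominates the Lipschitz coupling.

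For existence I would use a Brouwer/degree argument. Equip $X_{\disc,0}$ with the inner product $\langle\cdot,\cdot\rangle$ inducing the norm $\|\nabla_\disc\cdot\|_{L^2(\O)^d}$, and define the continuous map $\mathcal T:X_{\disc,0}^2\to X_{\disc,0}^2$ whose components are the Riesz representatives of the residuals of \eqref{rm-disc-pblm1}--\eqref{rm-disc-pblm2}, so that the zeros of $\mathcal T$ are exactly the solutions at step $n+1$; continuity follows from the continuity of $F_1,F_2$. Pairing $\mathcal T(w,z)$ with $(w,z)$ reproduces the energy identity: the mass terms give contributions of the form $\frac{1}{2\delta t^{(n+\frac{1}{2})}}(\|\Pi_\disc w\|_{L^2(\O)}^2-\|\Pi_\disc u^{(n)}\|_{L^2(\O)}^2)$, the diffusion terms are bounded below by $\underline\lambda_1\|\nabla_\disc w\|_{L^2(\O)^d}^2$ and $\underline\lambda_2\|\nabla_\disc z\|_{L^2(\O)^d}^2$, and the reaction terms are controlled by the growth bound $|F_i(a,b)|\le|F_i(0,0)|+L(|a|+|b|)$ together with the Poincar\'e inequality \eqref{ponc-enq} and Young's inequality. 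Under the stated time-step restriction these coercive contributions outweigh the reaction terms, so that $\langle\mathcal T(w,z),(w,z)\rangle>0$ once $\|\nabla_\disc w\|_{L^2(\O)^d}^2+\|\nabla_\disc z\|_{L^2(\O)^d}^2$ is large enough; the standard corollary of Brouwer's theorem then yields a zero, i.e.\ a solution.

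For uniqueness I would take two solutions $(u_1,v_1)$ and $(u_2,v_2)$ of \eqref{rm-disc-pblm} at step $n+1$ sharing the same data $(u^{(n)},v^{(n)})$, set $\delta u=u_1-u_2$ and $\delta v=v_1-v_2$, subtract the equations, and test the $u$-equation with $\delta u$ and the $v$-equation with $\delta v$. The discrete time-derivative terms then reproduce the mass quantities $\frac{1}{\delta t^{(n+\frac{1}{2})}}\|\Pi_\disc\delta u\|_{L^2(\O)}^2$ and $\frac{1}{\delta t^{(n+\frac{1}{2})}}\|\Pi_\disc\delta v\|_{L^2(\O)}^2$, the diffusion terms are bounded below via $\underline\lambda_1,\underline\lambda_2$, and the reaction differences are estimated by $|F_1(\Pi_\disc u_1,\Pi_\disc v_1)-F_1(\Pi_\disc u_2,\Pi_\disc v_2)|\le L(|\Pi_\disc\delta u|+|\Pi_\disc\delta v|)$ and its analogue for $F_2$. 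Adding the two estimates, the coupling produces a mixed term $2L\|\Pi_\disc\delta u\|_{L^2(\O)}\|\Pi_\disc\delta v\|_{L^2(\O)}$, which I would split with Young's inequality and then convert through \eqref{ponc-enq} into the $\nabla_\disc$-norms, introducing the factor $C_\disc$. The bound $\delta t^{(n+\frac{1}{2})}\le\frac{2}{L^2C_\disc^2(\underline\lambda_1+\underline\lambda_2)}$ is precisely the threshold at which the coercive (mass and diffusion) part outweighs this coupling, leaving a sum of nonnegative terms that can vanish only if $\nabla_\disc\delta u=\nabla_\disc\delta v=0$; since $\|\nabla_\disc\cdot\|_{L^2(\O)^d}$ is a norm on $X_{\disc,0}$, this forces $u_1=u_2$ and $v_1=v_2$.

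I expect the main obstacle to be the coupling between the two equations through $F_1$ and $F_2$, each depending on both $\Pi_\disc u$ and $\Pi_\disc v$: the single-equation argument of \cite{24} balances one coercivity--Lipschitz pair, whereas here the cross term $\|\Pi_\disc\delta u\|_{L^2(\O)}\|\Pi_\disc\delta v\|_{L^2(\O)}$ must be absorbed simultaneously against \emph{both} diffusion contributions, which is exactly what dictates the form of the time-step bound involving $\underline\lambda_1+\underline\lambda_2$. A secondary technical point is making the a priori estimate in the existence step genuinely uniform despite the inhomogeneous constants $F_i(0,0)$, so that the sign condition $\langle\mathcal T(w,z),(w,z)\rangle>0$ holds on a large enough sphere and Brouwer's theorem applies.
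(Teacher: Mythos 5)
Your strategy is sound but genuinely different from the paper's. The paper does not apply Brouwer's theorem to the nonlinear residual map at all: it freezes the reaction terms at a given pair $\bw=(w_1,w_2)$, observes that the resulting \emph{linear} system \eqref{ex-u-1}--\eqref{ex-v-1} has a unique solution $(u,v)$ by coercivity, and thereby defines a solution operator $G(\bw)=(u,v)$ on $X_{\disc,0}^2$ equipped with the norm $\|(w_1,w_2)\|_{X_{\disc,0}^2}=\|\nabla_\disc w_1\|_{L^2(\O)^d}+\|\nabla_\disc w_2\|_{L^2(\O)^d}$. Subtracting the equations for $G(\bw)$ and $G(\tilde\bw)$, testing with $u-\tilde u$ and $v-\tilde v$, and using Cauchy--Schwarz, the Lipschitz bound, Young's inequality and the discrete Poincar\'e inequality \eqref{ponc-enq}, it arrives at
\[
\|G(\bw)-G(\tilde\bw)\|_{X_{\disc,0}^2}\le \sqrt{\tfrac{2\,\delta t^{(n+\frac12)}}{\underline\lambda_1+\underline\lambda_2}}\,C_\disc L\,\|\bw-\tilde\bw\|_{X_{\disc,0}^2},
\]
so existence \emph{and} uniqueness follow in one stroke from the contraction-mapping principle (the paper cites Brouwer, but what is used is Banach's fixed point theorem). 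Your route --- Brouwer via an a priori sign condition for existence, plus a separate energy estimate on the difference of two solutions for uniqueness --- is a perfectly standard alternative and is in some ways more robust, since it does not require the solution operator to be a strict contraction; the price is that you must handle the inhomogeneous term $F_i(0,0)$ in the coercivity estimate and run two separate arguments where the paper runs one.

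One concrete caveat: your assertion that the stated bound $\delta t^{(n+\frac12)}\le \frac{2}{L^2C_\disc^2(\underline\lambda_1+\underline\lambda_2)}$ is ``precisely the threshold'' at which your energy argument closes is not substantiated and does not come out of the computation you sketch. Adding your two tested identities gives a right-hand side bounded by $L(\|\Pi_\disc\delta u\|_{L^2(\O)}+\|\Pi_\disc\delta v\|_{L^2(\O)})^2$, and absorbing this against the mass terms yields a condition of the form $\delta t^{(n+\frac12)}<\tfrac{1}{2L}$, while absorbing it against the diffusion terms via \eqref{ponc-enq} yields a condition of the form $\delta t^{(n+\frac12)}<\min(\underline\lambda_1,\underline\lambda_2)/(L^2C_\disc^2)$ --- neither of which is the bound in the statement. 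You should either derive the precise constant your argument actually requires, or note explicitly that your proof holds under a (possibly different) smallness condition on the time step of the same order $O(1/(L^2C_\disc^2))$; as written, the claim that the stated constant is exactly what your estimate produces is a gap.
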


\begin{proof}
At any time step $n+1$, the scheme \eqref{rm-disc-pblm} leads to a square system of non-linear equations on unkowns $u^{(n+1)}$ and $v^{(n+1)}$ if $u^{(n)}$ and $v^{(n)}$ are assumed to be exist and unique. For a given $\bw=(w_1,w_2) \in X_{\disc,0}^2$, the pair $(u,v)\in X_{\disc,0}^2$ is a unique solution to the linear square system:
\begin{equation}\label{ex-u-1}
\begin{aligned}
&\frac{1}{\delta t^{(n+\frac{1}{2})}}\dsp\int_\O \Pi_\disc(u-u^{(n)})(\x)\Pi_\disc \varphi(\x)
+\dsp\int_\O \Lambda_1(\x)\nabla_\disc u(\x) \cdot \nabla_\disc \varphi(\x)\ud \x\\
&\qquad=\dsp\int_\O F_1(\Pi_\disc w_1, \Pi_{\disc} w_2)\Pi_\disc \varphi(\x) \ud \x, \quad \forall \varphi \in X_{\disc,0},\mbox{ and}
\end{aligned}
\end{equation}
\begin{equation}\label{ex-v-1}
\begin{aligned}
&\frac{1}{\delta t^{(n+\frac{1}{2})}}\dsp\int_\O \Pi_\disc(v-v^{(n)})(\x)\Pi_\disc \psi(\x)
+\dsp\int_\O \Lambda_2(\x)\nabla_\disc v(\x) \cdot \nabla_\disc \psi(\x)\ud \x\\
&\qquad=\dsp\int_\O F_2(\Pi_\disc w_1, \Pi_{\disc} w_2)\Pi_\disc \psi(\x) \ud \x, \quad \forall \psi \in X_{\disc,0},
\end{aligned}
\end{equation}
Since $\|\nabla_\disc \cdot\|_{L^2(\O)^d}$ is a norm on the space $X_{\disc,0}$, we consider the normed space $(\|(\cdot,\cdot)\|_{X_{\disc,0}^2},\;X_{\disc,0}^2)$, where the function $\|(\cdot,\cdot)\|_{X_{\disc,0}^2}:X_{\disc,0}^2 \to [0,\infty)$ is defined by
\begin{equation}\label{new-norm}
\|(w_1,w_2)\|_{X_{\disc,0}^2}:=\|\nabla_\disc w_1\|_{L^2(\O)^d}+\|\nabla_\disc w_2\|_{L^2(\O)^d}, \quad \mbox{for all } w_1,w_2 \in X_{\disc,0}.
\end{equation}
Let $G(\bw)=(u,v)$, such that $G:X_{\disc,0}^2\to X_{\disc,0}^2$ define a mapping. With the existence and uniqueness of $(u^{(0)},v^{(0)})$, to prove the existence and uniquness of the discrete solution $(u,v)$, it is suffices to show that $G$ is a contractive mapping in the space $(X_{\disc,0}^2\;,\|(\cdot,\cdot)\|_{X_{\disc,0}^2})$, thanks to Brouwer’s fixed point theorem.

For $\bw=(w_1,w_2)$, $\tilde\bw=(\tilde w_1,\tilde w_2) \in X_{\disc,0}^2$ such that $(u,v)=G(\bw)$ and $(\tilde u,\tilde v)=G(\tilde\bw)$, it follows that 
\begin{equation}\label{ex-u-1-a}
\begin{aligned}
&\frac{1}{\delta t^{(n+\frac{1}{2})}}\dsp\int_\O \Pi_\disc(u-u^{(n)})(\x)\Pi_\disc \varphi(\x)
+\dsp\int_\O \Lambda_1(\x)\nabla_\disc u(\x) \cdot \nabla_\disc \varphi(\x)\ud \x\\
&\qquad=\dsp\int_\O F_1(\Pi_\disc w_1, \Pi_{\disc} w_2)\Pi_\disc \varphi(\x) \ud \x, \quad \forall \varphi \in X_{\disc,0},
\end{aligned}
\end{equation}
\begin{equation}\label{ex-u-1-b}
\begin{aligned}
&\frac{1}{\delta t^{(n+\frac{1}{2})}}\dsp\int_\O \Pi_\disc(\tilde u-u^{(n)})(\x)\Pi_\disc \varphi(\x)
+\dsp\int_\O \Lambda_1(\x)\nabla_\disc \tilde u(\x) \cdot \nabla_\disc \varphi(\x)\ud \x\\
&\qquad=\dsp\int_\O F_1(\Pi_\disc \tilde w_1, \Pi_{\disc} \tilde w_2)\Pi_\disc \varphi(\x) \ud \x, \quad \forall \varphi \in X_{\disc,0},
\end{aligned}
\end{equation}
\begin{equation}\label{ex-v-1-a}
\begin{aligned}
&\frac{1}{\delta t^{(n+\frac{1}{2})}}\dsp\int_\O \Pi_\disc(v-v^{(n)})(\x)\Pi_\disc \psi(\x)
+\dsp\int_\O \Lambda_2(\x)\nabla_\disc v(\x) \cdot \nabla_\disc \psi(\x)\ud \x\\
&\qquad=\dsp\int_\O F_2(\Pi_\disc w_1, \Pi_{\disc} w_2)\Pi_\disc \psi(\x) \ud \x, \quad \forall \psi \in X_{\disc,0},\mbox{ and }
\end{aligned}
\end{equation}
\begin{equation}\label{ex-v-1-b}
\begin{aligned}
&\frac{1}{\delta t^{(n+\frac{1}{2})}}\dsp\int_\O \Pi_\disc(\tilde v-v^{(n)})(\x)\Pi_\disc \psi(\x)
+\dsp\int_\O \Lambda_2(\x)\nabla_\disc \tilde v(\x) \cdot \nabla_\disc \psi(\x)\ud \x\\
&\qquad=\dsp\int_\O F_2(\Pi_\disc \tilde w_1, \Pi_{\disc} \tilde w_2)\Pi_\disc \psi(\x) \ud \x, \quad \forall \psi \in X_{\disc,0}.
\end{aligned}
\end{equation}
Subtract Equation \eqref{ex-u-1-b} from Equation \eqref{ex-u-1-a} (resp. Equation \eqref{ex-v-1-b} from \eqref{ex-v-1-a}) to obtain for all $\varphi,\; \psi \in X_{\disc,0}$, 
\begin{equation}\label{ex-u-2}
\begin{aligned}
&\frac{1}{\delta t^{(n+\frac{1}{2})}}\dsp\int_\O \Pi_\disc(u-\tilde u)(\x)\Pi_\disc \varphi(\x)
+\dsp\int_\O \Lambda_1(\x)\nabla_\disc (u-\tilde u)(\x) \cdot \nabla_\disc \varphi(\x)\ud \x\\
&\qquad=\dsp\int_\O \Big(F_1(\Pi_\disc w_1, \Pi_{\disc} w_2)
-F_1(\Pi_\disc \tilde w_1, \Pi_{\disc} \tilde w_2) \Big)\Pi_\disc \varphi(\x) \ud \x, \mbox{ and }
\end{aligned}
\end{equation}
\begin{equation}\label{ex-v-2}
\begin{aligned}
&\frac{1}{\delta t^{(n+\frac{1}{2})}}\dsp\int_\O \Pi_\disc(v-\tilde v)(\x)\Pi_\disc \psi(\x)
+\dsp\int_\O \Lambda_2(\x)\nabla_\disc (v-\tilde v)(\x) \cdot \nabla_\disc \psi(\x)\ud \x\\
&\qquad=\dsp\int_\O \Big(F_2(\Pi_\disc w_1, \Pi_{\disc} w_2)
-F_2(\Pi_\disc \tilde w_1, \Pi_{\disc} \tilde w_2) \Big)\Pi_\disc \psi(\x) \ud \x.
\end{aligned}
\end{equation}
Setting $\varphi=u-\tilde u$ in \eqref{ex-u-2} and $\psi=v-\tilde v$ in \eqref{ex-v-2}, we have
\begin{equation}\label{ex-u-3}
\begin{aligned}
&\frac{1}{\delta t^{(n+\frac{1}{2})}}\|\Pi_\disc(u-\tilde u)\|_{L^2(\O)}^2
+\frac{\underline\lambda_1}{\delta t^{(n+\frac{1}{2})}}\|\nabla_\disc(u-\tilde u)\|_{L^2(\O)^d}^2\\
&\qquad\leq\dsp\int_\O \Big(F_1(\Pi_\disc w_1, \Pi_{\disc} w_2)
-F_1(\Pi_\disc \tilde w_1, \Pi_{\disc} \tilde w_2) \Big)\Pi_\disc(u-\tilde u)(\x) \ud \x, \mbox{ and }
\end{aligned}
\end{equation}
\begin{equation}\label{ex-v-3}
\begin{aligned}
&\frac{1}{\delta t^{(n+\frac{1}{2})}}\|\Pi_\disc(v-\tilde v)\|_{L^2(\O)}^2
+\frac{\underline\lambda_2}{\delta t^{(n+\frac{1}{2})}}\|\nabla_\disc(v-\tilde v)\|_{L^2(\O)^d}^2\\
&\qquad\leq\dsp\int_\O \Big(F_2(\Pi_\disc w_1, \Pi_{\disc} w_2)
-F_2(\Pi_\disc \tilde w_1, \Pi_{\disc} \tilde w_2) \Big)\Pi_\disc(v-\tilde v)(\x) \ud \x.
\end{aligned}
\end{equation}
Regarding the right-hand side of the above inequalities, applying the Cauchy–Schwarz inequality, the Lipschitz continuity assumptions for $F_1$ and $F_2$, and Young's inequality, one has
\begin{equation}\label{ex-u-4}
\begin{aligned}
&\dsp\int_\O \Big(F_1(\Pi_\disc w_1, \Pi_{\disc} w_2)
-F_1(\Pi_\disc \tilde w_1, \Pi_{\disc} \tilde w_2) \Big)\Pi_\disc(u-\tilde u)(\x) \ud \x\\
&\leq L\Big(\|\Pi_\disc w_1 - \Pi_\disc \tilde w_1\|_{L^2(\O)} + \|\Pi_\disc w_2 - \Pi_\disc \tilde w_2\|_{L^2(\O)^d}  \Big)\|\Pi_\disc(u-\tilde u)\|_{L^2(\O)}\\
&\leq \frac{L^2}{2}\Big(\|\Pi_\disc w_1 - \Pi_\disc \tilde w_1\|_{L^2(\O)} + \|\Pi_\disc w_2 - \Pi_\disc \tilde w_2\|_{L^2(\O)}  \Big)^2\\
&\quad\quad\quad+\frac{1}{2}\|\Pi_\disc(u-\tilde u)\|_{L^2(\O)}^2, \mbox{ and}
\end{aligned}
\end{equation}
\begin{equation}\label{ex-v-4}
\begin{aligned}
&\dsp\int_\O \Big(F_2(\Pi_\disc w_1, \Pi_{\disc} w_2)
-F_2(\Pi_\disc \tilde w_1, \Pi_{\disc} \tilde w_2) \Big)\Pi_\disc(v-\tilde v)(\x) \ud \x\\
&\leq L\Big(\|\Pi_\disc w_1 - \Pi_\disc \tilde w_1\|_{L^2(\O)} + \|\Pi_\disc w_2 - \Pi_\disc \tilde w_2\|_{L^2(\O)^d}  \Big)\|\Pi_\disc(v-\tilde v)\|_{L^2(\O)}\\
&\leq \frac{L^2}{2}\Big(\|\Pi_\disc w_1 - \Pi_\disc \tilde w_1\|_{L^2(\O)} + \|\Pi_\disc w_2 - \Pi_\disc \tilde w_2\|_{L^2(\O)^d}  \Big)^2\\
&\quad\quad\quad+\frac{1}{2}\|\Pi_\disc(v-\tilde v)\|_{L^2(\O)}^2.
\end{aligned}
\end{equation}
Substitute Inequality \eqref{ex-u-4} into \eqref{ex-u-3} (resp. Inequality \eqref{ex-v-4} into Equation \eqref{ex-u-3}) to get
\begin{equation*}
\begin{aligned}
\frac{\underline\lambda_1}{\delta t^{(n+\frac{1}{2})}}\|\nabla_\disc(u&-\tilde u)\|_{L^2(\O)^d}^2\\
&\quad\leq \frac{L^2}{2}\Big(\|\Pi_\disc w_1 - \Pi_\disc \tilde w_1\|_{L^2(\O)} + \|\Pi_\disc w_2 - \Pi_\disc \tilde w_2\|_{L^2(\O)}  \Big)^2, \mbox{ and }
\end{aligned}
\end{equation*}
\begin{equation*}
\begin{aligned}
\frac{\underline\lambda_2}{\delta t^{(n+\frac{1}{2})}}\|\nabla_\disc(v&-\tilde v)\|_{L^2(\O)^d}^2\\
&\quad\leq \frac{L^2}{2}\Big(\|\Pi_\disc w_1 - \Pi_\disc \tilde w_1\|_{L^2(\O)} + \|\Pi_\disc w_2 - \Pi_\disc \tilde w_2\|_{L^2(\O)}  \Big)^2.
\end{aligned}
\end{equation*}
Taking the square root of both sides yields
\begin{equation*}\label{ex-u-5}
\begin{aligned}
\sqrt{ \frac{\underline\lambda_1}{\delta t^{(n+\frac{1}{2})}} }\|\nabla_\disc(u&-\tilde u)\|_{L^2(\O)^d}\\
&\quad\leq \frac{\sqrt{2}L}{2}\Big(\|\Pi_\disc w_1 - \Pi_\disc \tilde w_1\|_{L^2(\O)} + \|\Pi_\disc w_2 - \Pi_\disc \tilde w_2\|_{L^2(\O)}  \Big), \mbox{ and }
\end{aligned}
\end{equation*}
\begin{equation*}\label{ex-v-5}
\begin{aligned}
\sqrt{ \frac{\underline\lambda_2}{\delta t^{(n+\frac{1}{2})}} }\|\nabla_\disc(v&-\tilde v)\|_{L^2(\O)^d}\\
&\quad\leq \frac{\sqrt{2}L}{2}\Big(\|\Pi_\disc w_1 - \Pi_\disc \tilde w_1\|_{L^2(\O)} + \|\Pi_\disc w_2 - \Pi_\disc \tilde w_2\|_{L^2(\O)}  \Big).
\end{aligned}
\end{equation*}
Gathering the above inequalities leads to, due to the discrete Poincar\'e inequality \eqref{ponc-enq} and the relation $\sqrt{\underline\lambda_1+\underline\lambda_2}\leq \sqrt{\underline\lambda_1}+\sqrt{\underline\lambda_2}$,
\begin{equation*}\label{ex-6}
\begin{aligned}
&\sqrt{ \frac{\underline\lambda_1+\underline\lambda_2}{\delta t^{(n+\frac{1}{2})}} }\Big(\|\nabla_\disc(u-\tilde u)\|_{L^2(\O)^d}
+\|\nabla_\disc(v-\tilde v)\|_{L^2(\O)^d} \Big)\\
&\leq \sqrt{2}C_\disc L \Big(\|\nabla_\disc w_1 - \nabla_\disc \tilde w_1\|_{L^2(\O)^d} + \|\nabla_\disc w_2 - \nabla_\disc \tilde w_2\|_{L^2(\O)^d}  \Big),
\end{aligned}
\end{equation*}
which provides, thanks to the definition of the norm given by \eqref{new-norm}
\begin{equation*}\label{ex-8}
\begin{aligned}
\sqrt{ \frac{\underline\lambda_1+\underline\lambda_2}{\delta t^{(n+\frac{1}{2})}} }\|(u-\tilde u,v-\tilde v)\|_{X_{\disc,0}^2}
\leq \sqrt{2}C_\disc L\|(w_1 -\tilde w_1,w_2 -\tilde w_2)\|_{X_{\disc,0}^2}.
\end{aligned}
\end{equation*}
One is able to write
\begin{equation*}\label{ex-9}
\begin{aligned}
\sqrt{ \frac{\underline\lambda_1+\underline\lambda_2}{\delta t^{(n+\frac{1}{2})}} }\|(u,v)-(\tilde u,\tilde v)\|_{X_{\disc,0}^2}
\leq \sqrt{2}C_\disc L\|(w_1,w_2)-(\tilde w_1,\tilde w_2)\|_{X_{\disc,0}^2}.
\end{aligned}
\end{equation*}
Using the relation $(u,v)=G(\bw)$ and $(\tilde u,\tilde v)=G(\tilde \bw)$, we obtain
\begin{equation*}\label{ex-10}
\begin{aligned}
\|G(\bw)-G(\tilde \bw)\|_{X_{\disc,0}^2}
\leq\sqrt{ \frac{2\delta t^{(n+\frac{1}{2})}} {\underline\lambda_1+\underline\lambda_2} }C_\disc L\|\bw-\tilde \bw\|_{X_{\disc,0}^2},
\end{aligned}
\end{equation*}
which shows that $G$ is a contractive mapping.
\end{proof}

\section{Convergence rates}\label{sec-3}
Here, we state the main results of our contribution along with error estimates. The established convergence rates depend on the space size and the time discretisation. The convergence rates relative to space size depend on the three indicators $C_\disc$, $S_\disc$ and $W_\disc$. Let $h_\disc$ denote the space size defined in \cite[Definition 2.22]{21}. With respect to the continuously embedded spaces $W^{2,\infty}(\O)$ and $W^{1,\infty}(\O)^d$, it is defined by
\[
h_\disc:=\dsp\max\Big( 
\dsp\sup_{\varphi\in W^{2,\infty}(\O)\setminus\{0\}}\frac{S_\disc(\varphi)}{\|\varphi\|_{W^{2,\infty}(\O)}},  
\dsp\sup_{\varphi\in W^{1,\infty}(\O)^d\setminus\{0\}}\frac{W_\disc(\bphi)}{\|\bphi\|_{W^{1,\infty}(\O)^d}}
\Big),
\]
and it therefore satisfies
\begin{subequations}\label{hd}
\begin{equation}\label{hd-1}
\forall \varphi \in W^{2,\infty}(\O) \cap H_0^1(\O),\quad S_\disc(\varphi)\leq h_\disc \|\varphi\|_{W^{2,\infty}(\O)},
\end{equation}
\begin{equation}\label{hd-2}
\forall \bphi \in W^{1,\infty}(\O)^d \cap H_0^1(\O)^d,\quad W_\disc(\bphi)\leq h_\disc\|\bphi\|_{W^{1,\infty}(\O)^d}.
\end{equation}
\end{subequations}

It is worth mentioning that \cite[Lemma 1]{22} provides us with a linear spatial interpolator $P_\disc:H_0^1(\O)\to X_{\disc,0}$ defined by
\begin{equation}\label{PD}
P_\disc(w):=\arg\min_{w\in X_{\disc,0}} \|\Pi_\disc w - \varphi\|_{L^2(\O)}
+\|\nabla_\disc w - \nabla\varphi\|_{L^2(\O)},
\end{equation}
and it meets the following relation
\begin{equation}\label{PD-r}
S_\disc(\varphi)= \Big( \|\Pi_\disc P_\disc\varphi - \varphi\|_{L^2(\O)}
+\|\nabla_\disc P_\disc\varphi - \nabla\varphi\|_{L^2(\O)} \Big)^\frac{1}{2}, \quad \forall \varphi\in H_0^1(\O).
\end{equation}
Let $\varphi\in H_0^1(\O)$. For any arbitrary $w\in X_{\disc,0}$, the definition of $S_\disc$ and the interpolant $P_\disc$ show that
\begin{equation}\label{new-eq-proof-100}
\begin{aligned}
\Big( \|\Pi_\disc P_\disc\varphi &- \varphi\|_{L^2(\O)}^2
+\|\nabla_\disc P_\disc\varphi - \nabla\varphi\|_{L^2(\O)^d}^2  \Big)^{1/2}
\\
&\leq \Big( \|\Pi_\disc w - \varphi\|_{L^2(\O)}^2
+\|\nabla_\disc w - \nabla\varphi\|_{L^2(\O)^d}^2  \Big)^{1/2}\\
&\leq \sqrt{2}S_\disc(\varphi).
\end{aligned}
\end{equation}

\begin{theorem}\label{thm-err-rm} 
Under Assumptions \ref{assump-rm}, let $\disc$ be a space-time gradient discretisation, $(\bar u,\bar v)$ and $(u,v)$ be the solutions to the continous problem \eqref{rm-weak} and to the approximate problem \eqref{rm-disc-pblm}, respectively, and $\delta t=\max_{n\in\{0,...,N-1\}}\delta t^{(n+\frac{1}{2})}$. If $(\bar u,\bar v) \in (W^{1,\infty}(0,T;W^{2,\infty}(\O)))^2$ and $\Lambda_1$ and $\Lambda_2$ are Lipschitz continuous, then the following error estimates hold:
\begin{subequations}\label{eq-error}
\begin{equation}\label{eq-error-1}
\begin{aligned}
\max_{t\in[0,T]}\Big\| \Pi_\disc u(\cdot,t)-\bar u(\cdot,t) \Big\|_{L^2(\O)}
&+\max_{t\in[0,T]}\Big\| \Pi_\disc v(\cdot,t)-\bar v(\cdot,t) \Big\|_{L^2(\O)}\\
&\leq C(\delta t+h_\disc+E_\disc^0+\widetilde E_\disc^0),
\end{aligned}
\end{equation}
\begin{equation}\label{eq-error-2}
\begin{aligned}
\Big\| \nabla_\disc u-\nabla\bar u \Big\|_{L^2(\O\times(0,T))^d}
&+\Big\| \nabla_\disc v-\nabla\bar v \Big\|_{L^2(\O\times(0,T))^d}
\\
&\leq C(\delta t+h_\disc+E_\disc^0+\widetilde E_\disc^0).
\end{aligned}
\end{equation}
\end{subequations}

Here $C>0$ does not depend on discrete data, and $E_\disc^0$ and $\widetilde E_\disc^0$ stand to the interpolation of the initial conditions defined by
\begin{equation}
E_\disc^0=\|u_{\rm ini}-\Pi_\disc J_\disc u_{\rm ini} \|_{L^2(\O)}
\mbox{ and }
\widetilde E_\disc^0=\|v_{\rm ini}-\Pi_\disc J_\disc v_{\rm ini} \|_{L^2(\O)}.
\end{equation}
\end{theorem}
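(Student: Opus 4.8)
The plan is to compare the discrete solution $(u,v)$ with the GDM interpolant $(P_\disc\bar u,P_\disc\bar v)$ of the exact solution and to close a \emph{coupled} discrete energy estimate by a discrete Gronwall argument, in the spirit of \cite{22} but treating the two equations simultaneously. Writing $\bar u^{(n)}=\bar u(\cdot,t^{(n)})$ and $\bar v^{(n)}=\bar v(\cdot,t^{(n)})$, I introduce the discrete interpolation errors $\zeta_u^{(n)}:=P_\disc\bar u^{(n)}-u^{(n)}$ and $\zeta_v^{(n)}:=P_\disc\bar v^{(n)}-v^{(n)}$ in $X_{\disc,0}$. Since $\Pi_\disc u^{(n)}-\bar u^{(n)}=-\Pi_\disc\zeta_u^{(n)}+(\Pi_\disc P_\disc\bar u^{(n)}-\bar u^{(n)})$ and likewise for the gradients, and since the interpolation remainders are controlled by $\sqrt2\,S_\disc\le C h_\disc$ through \eqref{new-eq-proof-100} and \eqref{hd-1}, the whole estimate reduces to bounding $\|\Pi_\disc\zeta_u^{(n)}\|_{L^2(\O)}$, $\|\Pi_\disc\zeta_v^{(n)}\|_{L^2(\O)}$ and the discrete gradients $\|\nabla_\disc\zeta_u^{(n)}\|_{L^2(\O)^d}$, $\|\nabla_\disc\zeta_v^{(n)}\|_{L^2(\O)^d}$.

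The first step is a \emph{discrete consistency relation} for $(\bar u,\bar v)$. The regularity $(\bar u,\bar v)\in(W^{1,\infty}(0,T;W^{2,\infty}(\O)))^2$ together with the Lipschitz continuity of $\Lambda_1,\Lambda_2$ ensures $\Lambda_1\nabla\bar u(\cdot,t),\Lambda_2\nabla\bar v(\cdot,t)\in H_{\rm div}(\O)$ with $-\div(\Lambda_1\nabla\bar u)=F_1(\bar u,\bar v)-\dr_t\bar u$ (that is, \eqref{rm-strong1}) and $-\div(\Lambda_2\nabla\bar v)=F_2(\bar u,\bar v)-\dr_t\bar v$ holding a.e. Integrating these strong identities over $(t^{(n)},t^{(n+1)})$, dividing by $\delta t^{(n+\frac{1}{2})}$, testing with $\Pi_\disc w$ and invoking the definition \eqref{WD} of $W_\disc$ on the time-averaged fluxes, I obtain, for every $w\in X_{\disc,0}$,
\begin{equation*}
\frac{1}{\delta t^{(n+\frac{1}{2})}}\int_\O (\bar u^{(n+1)}-\bar u^{(n)})\Pi_\disc w\ud\x
+\int_\O \Lambda_1(\x)\nabla\bar u^{(n+1)}\cdot\nabla_\disc w\ud\x
=\int_\O F_1(\bar u^{(n+1)},\bar v^{(n+1)})\Pi_\disc w\ud\x+\mathcal R_1^{(n+1)}(w),
\end{equation*}
and an analogous identity with $\Lambda_2,F_2,\bar v$ and a defect $\mathcal R_2^{(n+1)}(w)$. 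Here the time-averaged diffusion and reaction contributions are replaced by the endpoint values at $t^{(n+1)}$ at the cost of an $O(\delta t)$ term, using that $\nabla\bar u$ and $F_1(\bar u,\bar v)$ are Lipschitz in time, while the conformity defect is handled through \eqref{hd-2}; altogether $|\mathcal R_i^{(n+1)}(w)|\le C(\delta t+h_\disc)\big(\|\Pi_\disc w\|_{L^2(\O)}+\|\nabla_\disc w\|_{L^2(\O)^d}\big)$.

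Subtracting the scheme \eqref{rm-disc-pblm} from this relation, inserting $P_\disc\bar u^{(n+1)}$ and $P_\disc\bar v^{(n+1)}$, and testing the $u$-equation with $w=\zeta_u^{(n+1)}$ and the $v$-equation with $w=\zeta_v^{(n+1)}$, the accumulation terms are handled by the elementary inequality $a(a-b)\ge\tfrac12(a^2-b^2)$, producing telescoping sums, while the diffusion terms are bounded below by $\underline\lambda_1\|\nabla_\disc\zeta_u^{(n+1)}\|_{L^2(\O)^d}^2$ and $\underline\lambda_2\|\nabla_\disc\zeta_v^{(n+1)}\|_{L^2(\O)^d}^2$. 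On the right-hand side, the consistency defects $\mathcal R_i^{(n+1)}$ and the gradient interpolation remainders are absorbed, after Cauchy--Schwarz, the discrete Poincar\'e inequality \eqref{ponc-enq} and Young's inequality, into a small fraction of the diffusion terms plus data contributions of size $O((\delta t+h_\disc)^2)$. The reaction differences are estimated through the Lipschitz continuity of $F_1,F_2$ and the splitting $\Pi_\disc u^{(n+1)}-\bar u^{(n+1)}=-\Pi_\disc\zeta_u^{(n+1)}+(\Pi_\disc P_\disc\bar u^{(n+1)}-\bar u^{(n+1)})$ (and its analogue for $v$), which bounds them in $L^2(\O)$ by $L(\|\Pi_\disc\zeta_u^{(n+1)}\|_{L^2(\O)}+\|\Pi_\disc\zeta_v^{(n+1)}\|_{L^2(\O)})+Ch_\disc$. \emph{Adding the two equations} is the key system step: the cross-term $\|\Pi_\disc\zeta_v^{(n+1)}\|_{L^2(\O)}^2$ produced by $F_1$ and the term $\|\Pi_\disc\zeta_u^{(n+1)}\|_{L^2(\O)}^2$ produced by $F_2$ enter symmetrically, so after summing over $n=0,\dots,m-1$ one reaches
\begin{equation*}
\begin{aligned}
&\tfrac12\Big(\|\Pi_\disc\zeta_u^{(m)}\|_{L^2(\O)}^2+\|\Pi_\disc\zeta_v^{(m)}\|_{L^2(\O)}^2\Big)
+\sum_{n=0}^{m-1}\delta t^{(n+\frac{1}{2})}\Big(\underline\lambda_1\|\nabla_\disc\zeta_u^{(n+1)}\|_{L^2(\O)^d}^2+\underline\lambda_2\|\nabla_\disc\zeta_v^{(n+1)}\|_{L^2(\O)^d}^2\Big)\\
&\quad\le C\big(\delta t+h_\disc+E_\disc^0+\widetilde E_\disc^0\big)^2
+C\sum_{n=0}^{m-1}\delta t^{(n+\frac{1}{2})}\Big(\|\Pi_\disc\zeta_u^{(n+1)}\|_{L^2(\O)}^2+\|\Pi_\disc\zeta_v^{(n+1)}\|_{L^2(\O)}^2\Big),
\end{aligned}
\end{equation*}
the initial contribution being bounded, using $u^{(0)}=J_\disc u_{\rm ini}$ and $v^{(0)}=J_\disc v_{\rm ini}$, by $C(h_\disc^2+(E_\disc^0)^2+(\widetilde E_\disc^0)^2)$. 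A discrete Gronwall lemma (licit for $\delta t$ small, as in the preceding lemma) absorbs the last sum and yields a uniform bound on $\max_{0\le m\le N}(\|\Pi_\disc\zeta_u^{(m)}\|_{L^2(\O)}^2+\|\Pi_\disc\zeta_v^{(m)}\|_{L^2(\O)}^2)$ and on the summed gradient terms by $C(\delta t+h_\disc+E_\disc^0+\widetilde E_\disc^0)^2$.

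Finally, the two claimed estimates \eqref{eq-error-1}--\eqref{eq-error-2} follow from the triangle inequality and the decomposition recorded in the first paragraph, bounding the interpolation remainders by $S_\disc\le h_\disc\|\cdot\|_{W^{2,\infty}(\O)}$. The main obstacle, and the genuinely new part relative to the single-equation analysis of \cite{22}, is the treatment of the \emph{coupled} reaction terms: each $F_i$ depends on both unknowns, so a naive per-equation estimate does not close, and one must add the two equations and run a single Gronwall argument on the combined energy $\|\Pi_\disc\zeta_u\|_{L^2(\O)}^2+\|\Pi_\disc\zeta_v\|_{L^2(\O)}^2$, relying only on Lipschitz continuity of $F_1,F_2$ (no sign or monotonicity structure). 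A secondary technical point is attaining the full first-order rate in $\delta t$ with merely $W^{1,\infty}$-in-time regularity, which forces the time-averaged consistency formulation of the second paragraph rather than a pointwise evaluation of $\dr_t\bar u$ at $t^{(n+1)}$.
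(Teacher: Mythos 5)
Your proposal is correct and follows the same architecture as the paper's proof: the same comparison functions $P_\disc\bar u(t^{(n)})$ and $P_\disc\bar v(t^{(n)})$ (your $\zeta_u^{(n)},\zeta_v^{(n)}$ are exactly the paper's $E^{(n)},\widetilde E^{(n)}$), the same consistency analysis via $S_\disc$ for the discrete time derivative and the gradients and via $W_\disc$ for the time-averaged fluxes $\Lambda_i\nabla\bar u^{(n+1)}$, the same test functions $\delta t^{(n+\frac12)}\zeta_u^{(n+1)}$ and $\delta t^{(n+\frac12)}\zeta_v^{(n+1)}$, the same telescoping inequality $\beta(\beta-\alpha)\ge\tfrac12(\beta^2-\alpha^2)$, and the same Lipschitz splitting of the reaction differences through $\Pi_\disc P_\disc\bar u(t^{(n+1)})$ and $\Pi_\disc P_\disc\bar v(t^{(n+1)})$. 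The one place where you genuinely diverge is the closing of the coupled estimate. You add the two energy inequalities \emph{before} invoking the discrete Gronwall lemma, so the cross terms $\|\Pi_\disc\zeta_v^{(n+1)}\|_{L^2(\O)}^2$ (produced by $F_1$) and $\|\Pi_\disc\zeta_u^{(n+1)}\|_{L^2(\O)}^2$ (produced by $F_2$) simply join the Gronwall sum for the combined energy $\|\Pi_\disc\zeta_u\|_{L^2(\O)}^2+\|\Pi_\disc\zeta_v\|_{L^2(\O)}^2$. The paper instead applies Gronwall to each inequality separately, keeping the other unknown's error on the right-hand side, and then absorbs those leftover sums into the gradient terms on the left via the discrete Poincar\'e inequality; this yields coefficients of the form $\tfrac{\underline\lambda_1-\varepsilon_1}{2}-\tfrac{C_\disc}{2\varepsilon_8}\exp(TM_2)$ whose positivity is asserted rather than verified and cannot be guaranteed for arbitrary $C_\disc$, $T$ and $L$ (enlarging $\varepsilon_8$ to shrink the subtracted term also enlarges $M_2$ and hence $\exp(TM_2)$). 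Your variant needs no such hidden compatibility condition between the data and is the more robust of the two; everything else in your argument matches the paper's reasoning step for step.
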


\begin{proof}
The proof is inspired from \cite{22}. Here, the constants $C_i>0$ do not depend on the parameters of the discretisations. Also, for $0\leq n \leq N-1$, we use the notation
\[
f^{(n+1)}(\x):=\frac{1}{\delta t^{(n+\frac{1}{2})}}\dsp\int_{t^{(n)}}^{t^{(n+1)}} f(\x,t) \ud t, \mbox{ where $f=\bar u, \bar v, \dr_t \bar u, \dr_t \bar v, F_1 $ or $ F_2$.} 
\]  
Based on the assumptions stated in the theorem, we observe that $\nabla\bar u,\nabla\bar v: [0,T] \to L^2(\O)^d$ are Lipschitz--continuous. Consequently, employing \eqref{PD} to $\bar u(t^{(n+1)})$ and to $\bar v(t^{(n+1)})$, we infer thanks to \eqref{hd-1}
\begin{equation}\label{eq-proof-1-u}
\begin{aligned}
\Big\| \nabla\bar u^{(n+1)}&-\nabla_\disc  P_\disc \bar u(t^{(n+1)}) \Big\|_{L^2(\O)^d}\\
&\leq \Big\| \nabla\bar u^{(n+1)}-\nabla\bar u(t^{(n+1)}) \Big\|_{L^2(\O)^d} + S_\disc(\bar u(t^{(n+1)}))\\
&\leq C_1(\delta t+h_\disc),\mbox{ and }
\end{aligned}
\end{equation}
\begin{equation}\label{eq-proof-1-v}
\begin{aligned}
\Big\| \nabla\bar v^{(n+1)}&-\nabla_\disc  P_\disc \bar v(t^{(n+1)}) \Big\|_{L^2(\O)^d}\\
&\leq \Big\| \nabla\bar v^{(n+1)}-\nabla\bar v(t^{(n+1)}) \Big\|_{L^2(\O)^d} + S_\disc(\bar v(t^{(n+1)}))\\
&\leq C_2(\delta t+h_\disc).
\end{aligned}
\end{equation}
We note that both $\| \dr_t \bar u^{(n+1)} \|_{H^2(\O)}$ and $\| \dr_t \bar v^{(n+1)} \|_{H^2(\O)}$ remain bounded independently of $n$. Direct application of the interpolant $P_\disc$ defined by \eqref{PD} to $\varphi:=\dr_t\bar u^{(n+1)}=\frac{\bar u(t^{(n+1)})-\bar u(t^{(n)})}{\delta t^{(n+\frac{1}{2})}}$, and to $\varphi:=\dr_t\bar v^{(n+1)}=\frac{\bar v(t^{(n+1)})-\bar v(t^{(n)})}{\delta t^{(n+\frac{1}{2})}}$ yields, thanks to the linearity of the interpolant $P_\disc$ and the relation \eqref{hd-1}
\begin{equation}\label{eq-proof-2-u}
\begin{aligned}
\Big\| \frac{\Pi_\disc  P_\disc\bar u(t^{(n+1)})-\Pi_\disc  P_\disc \bar u(t^{(n)})}
{\delta t^{(n+\frac{1}{2})}}
-\partial_t \bar u^{(n+1)} \Big\|_{L^2(\O)}
&\leq S_\disc( \partial_t \bar u^{(n+1)})\\
&\leq C_3h_\disc, \mbox{ and }
\end{aligned}
\end{equation}
\begin{equation}\label{eq-proof-2-v}
\begin{aligned}
\Big\| \frac{\Pi_\disc  P_\disc\bar v(t^{(n+1)})-\Pi_\disc  P_\disc \bar v(t^{(n)})}
{\delta t^{(n+\frac{1}{2})}}
-\partial_t \bar v^{(n+1)} \Big\|_{L^2(\O)}
&\leq S_\disc( \partial_t \bar v^{(n+1)})\\
&\leq C_4h_\disc.
\end{aligned}
\end{equation}
Given that $\Lambda_1\nabla\bar u^{(n+1)}$ and $\Lambda_2\nabla\bar v^{(n+1)}$ belong to $H_{\div}(\O)$, we can apply the limit-conformity indicotor \eqref{WD} to $\bpsi=:\Lambda_1\nabla\bar u^{(n+1)}$ and to $\bpsi=:\Lambda_2\nabla\bar v^{(n+1)}$ to obtain, thanks to \eqref{hd-2}
\begin{equation}\label{eq-limi-conf-u}
\begin{aligned}
\dsp\int_\O \Big(\Pi_\disc w(\x)&\div( \Lambda_1\nabla\bar u^{(n+1)}(\x))
+\Lambda_1\nabla\bar u^{(n+1)}(\x)\cdot \nabla_\disc w(\x) \Big) \ud \x \\
&\leq W_\disc(\Lambda_1\nabla\bar u^{(n+1)}) \| \nabla_\disc w \|_{L^2(\O)^d}\\
&\leq C_5h_\disc \| \nabla_\disc w \|_{L^2(\O)^d},\quad \forall w\in X_{\disc,0} \mbox{ and }
\end{aligned}
\end{equation}
\begin{equation}\label{eq-limi-conf-v}
\begin{aligned}
\int_\O \Big(\Pi_\disc w(\x)&\div( \Lambda_2\nabla\bar v^{(n+1)}(\x))
+\Lambda_2\nabla\bar v^{(n+1)}(\x)\cdot \nabla_\disc w(\x) \Big) \ud \x \\
&\leq W_\disc(\Lambda_2\nabla\bar v^{(n+1)}) \| \nabla_\disc w \|_{L^2(\O)^d}\\
&\leq C_6h_\disc \| \nabla_\disc w \|_{L^2(\O)^d},\quad \forall w\in X_{\disc,0}.
\end{aligned}
\end{equation}
Notice that the solution $(\bar u,\bar v)$ is sufficiently regular to guarantee that Equations \eqref{rm-strong1}--\eqref{rm-strong2} hold a.e. in space and time. Averaging over time in $(t^{(n)},t^{(n+1)})$ gives $\dr_t \bar u^{(n+1)}-F_1(\bar u^{(n+1)},\bar v^{(n+1)}) = \div(\Lambda_1\nabla\bar u^{(n+1)})$ and $\dr_t \bar v^{(n+1)}-F_2(\bar u^{(n+1)},\bar v^{(n+1)}) = \div(\Lambda_2\nabla\bar v^{(n+1)})$. Therefore, inserting these equalities in \eqref{eq-limi-conf-u} and \eqref{eq-limi-conf-v} leads to 
\begin{equation}\label{eq-proof-3-u}
\begin{aligned}
\int_\O \Pi_\disc w(\x) \Big( \dr_t\bar u^{(n+1)}(\x) &- F_1(\bar u^{(n+1)},\bar v^{(n+1)}) \Big) \ud \x\\
&\quad+\dsp\int_\O\Lambda_1\nabla\bar u^{(n+1)}(\x)\cdot \nabla_\disc w(\x) \ud \x \\
&\leq C_5h_\disc \| \nabla_\disc w \|_{L^2(\O)^d},\quad \forall w\in X_{\disc,0} \mbox{ and }
\end{aligned}
\end{equation}
\begin{equation}\label{eq-proof-3-v}
\begin{aligned}
\int_\O \Pi_\disc w(\x) \Big( \dr_t\bar v^{(n+1)}(\x) &- F_2(\bar u^{(n+1)},\bar v^{(n+1)}) \Big)\\
&\quad+\dsp\int_\O\Lambda_2\nabla\bar v^{(n+1)}(\x)\cdot \nabla_\disc w(\x)  \ud \x \\
&\leq C_6h_\disc \| \nabla_\disc w \|_{L^2(\O)^d}\quad \forall w\in X_{\disc,0}.
\end{aligned}
\end{equation}
By introducing $F_1(\Pi_\disc u^{(n+1)},\Pi_\disc v^{(n+1)})$ and $F_2(\Pi_\disc u^{(n+1)},\Pi_\disc v^{(n+1)})$, we infer
\begin{equation}\label{eq-proof-4-u}
\begin{aligned}
\int_\O &\Pi_\disc w(\x) \Big( \dr_t\bar u^{(n+1)}(\x)- F_1(\Pi_\disc u^{(n+1)},\Pi_\disc v^{(n+1)}) \Big) \ud \x\\
&\quad+\int_\O \Pi_\disc w(\x)\Big( F_1(\Pi_\disc u^{(n+1)},\Pi_\disc v^{(n+1)}) - F_1(\bar u^{(n+1)},\bar v^{(n+1)}) \Big) \ud \x\\
&\quad+\int_\O\Lambda_1\nabla\bar u^{(n+1)}(\x)\cdot \nabla_\disc w(\x)  \ud \x \\
&\leq C_5h_\disc \| \nabla_\disc w \|_{L^2(\O)^d},\quad \forall w\in X_{\disc,0}, \mbox{ and }
\end{aligned}
\end{equation}
\begin{equation}\label{eq-proof-4-v}
\begin{aligned}
\int_\O &\Pi_\disc w(\x) \Big( \dr_t\bar v^{(n+1)}(\x)- F_2(\Pi_\disc u^{(n+1)},\Pi_\disc v^{(n+1)}) \Big) \ud \x\\
&\quad+\int_\O \Pi_\disc w(\x)\Big( F_2(\Pi_\disc u^{(n+1)},\Pi_\disc v^{(n+1)}) - F_2(\bar u^{(n+1)},\bar v^{(n+1)}) \Big) \ud \x\\
&\quad+\int_\O\Lambda_2\nabla\bar v^{(n+1)}(\x)\cdot \nabla_\disc w(\x)  \ud \x \\
&\leq C_6h_\disc \| \nabla_\disc w \|_{L^2(\O)^d},\quad \forall w\in X_{\disc,0}.
\end{aligned}
\end{equation}
Concerning the first term in the above inequalities, from the gradient scheme \eqref{rm-disc-pblm}, and recalling the property that $\nabla\bar u^{(n+1)}$ and $\nabla\bar v^{(n+1)}$ are bounded in the space $W^{1,\infty}(\O)^d$, we infer
\begin{equation}\label{eq-proof-5-u}
\begin{aligned}
\int_\O &\Pi_\disc w(\x) \Big( \dr_t\bar u^{(n+1)}(\x)- \delta_\disc^{(n+\frac{1}{2})}u(\x) \Big) \ud \x\\
&\quad+\int_\O \Lambda_1\Big(\nabla\bar u^{(n+1)}(\x)-\nabla_\disc u^{(n+1)} \Big)\cdot \nabla_\disc w(\x)  \ud \x \\
&\leq\int_\O \Pi_\disc w(\x)\Big(F_1(\bar u^{(n+1)},\bar v^{(n+1)})-F_1(\Pi_\disc u^{(n+1)},\Pi_\disc v^{(n+1)}) \Big) \ud \x\\
&\quad+C_7h_\disc \| \nabla_\disc w \|_{L^2(\O)^d},\quad \forall w\in X_{\disc,0}, \mbox{ and }
\end{aligned}
\end{equation}
\begin{equation}\label{eq-proof-5-v}
\begin{aligned}
\int_\O &\Pi_\disc w(\x) \Big( \dr_t\bar v^{(n+1)}(\x)- \delta_\disc^{(n+\frac{1}{2})}v(\x) \Big) \ud \x\\
&\quad+\int_\O \Lambda_2\Big(\nabla\bar v^{(n+1)}(\x)-\nabla_\disc v^{(n+1)} \Big)\cdot \nabla_\disc w(\x)  \ud \x \\
&\leq\int_\O \Pi_\disc w(\x)\Big(F_2(\bar u^{(n+1)},\bar v^{(n+1)})-F_1(\Pi_\disc u^{(n+1)},\Pi_\disc v^{(n+1)}) \Big) \ud \x\\
&\quad+C_8h_\disc \| \nabla_\disc w \|_{L^2(\O)^d}\quad \forall w\in X_{\disc,0}.
\end{aligned}
\end{equation}
Set the notions $E^{(k)}:=P_\disc\bar u(t^{(k)})-u^{(k)}$ and $\widetilde E^{(k)}:=P_\disc\bar v(t^{(k)})-v^{(k)}$, for $k=1,...,N$. We have
\[
\begin{aligned}
\delta_\disc^{(n+\frac{1}{2})}E:&=\frac{ \Pi_\disc E^{(n+1)}-\Pi_\disc E^{(n)} }{ \delta t^{(n+\frac{1}{2})} }\\
&=\Big(  \dsp\frac{ \Pi_\disc  P_\disc\bar u(t^{(n+1)})-\Pi_\disc  P_\disc(\bar u(t^{(n)})) }{ \delta t^{(n+\frac{1}{2})} }-\dr_t\bar u^{(n+1)} \Big)+\Big( \dr_t\bar u^{(n+1)}-\delta_\disc^{(n+\frac{1}{2})} u \Big),
\end{aligned}
\]
\[
\begin{aligned}
\delta_\disc^{(n+\frac{1}{2})}\widetilde E:&=\frac{ \Pi_\disc \widetilde E^{(n+1)}-\Pi_\disc \widetilde E^{(n)} }{ \delta t^{(n+\frac{1}{2})} }\\
&=\Big(  \dsp\frac{ \Pi_\disc  P_\disc\bar v(t^{(n+1)})-\Pi_\disc  P_\disc(\bar v(t^{(n)})) }{ \delta t^{(n+\frac{1}{2})} }-\dr_t\bar v^{(n+1)} \Big)+\Big( \dr_t\bar v^{(n+1)}-\delta_\disc^{(n+\frac{1}{2})} v \Big),
\end{aligned}
\]
\[
\Lambda_1\nabla_\disc E^{(n+1)}= \Lambda_1\left( \nabla_\disc  (P_\disc\bar u(t^{(n+1)}))-\nabla\bar u^{(n+1)}  \right) + \Lambda_1\left(\nabla\bar u^{(n+1)}-\nabla_\disc u^{(n+1)}  \right),
\]
\[
\Lambda_2\nabla_\disc \widetilde E^{(n+1)}= \Lambda_2\left( \nabla_\disc  (P_\disc\bar v(t^{(n+1)}))-\Lambda_2\nabla\bar v^{(n+1)}  \right) + \Lambda_2\left(\nabla\bar v^{(n+1)}-\nabla_\disc v^{(n+1)}  \right).
\]
Using \eqref{eq-proof-5-u}, \eqref{eq-proof-2-u} and \eqref{eq-proof-1-u}, we derive
\begin{equation}\label{eq-proof-6-u}
\begin{aligned}
\int_\O &\Pi_\disc w(\x)\delta_\disc^{(n+\frac{1}{2})}E(\x) \ud \x
+\int_\O \nabla_\disc w(\x)\cdot \Lambda_1\nabla_\disc E^{(n+1)}(\x) \ud \x\\
&\quad\leq\int_\O \Pi_\disc w(\x)\Big(F_1(\bar u^{(n+1)},\bar v^{(n+1)})-F_1(\Pi_\disc u^{(n+1)},\Pi_\disc v^{(n+1)}) \Big) \ud \x\\
&\quad+C_9(\delta t+h_\disc)\|\nabla_\disc w\|_{L^2(\O)^d}\quad \forall w\in X_{\disc,0}.
\end{aligned}
\end{equation}
Also, using \eqref{eq-proof-5-v}, \eqref{eq-proof-2-v} and \eqref{eq-proof-1-v}, we derive
\begin{equation}\label{eq-proof-6-v}
\begin{aligned}
\int_\O &\Pi_\disc w(\x)\delta_\disc^{(n+\frac{1}{2})}\widetilde E(\x) \ud \x
+\int_\O \nabla_\disc w(\x)\cdot \Lambda_2\nabla_\disc \widetilde E^{(n+1)}(\x) \ud \x\\
&\quad\leq\int_\O \Pi_\disc w(\x)\Big(F_2(\bar u^{(n+1)},\bar v^{(n+1)})-F_2(\Pi_\disc u^{(n+1)},\Pi_\disc v^{(n+1)}) \Big) \ud \x\\
&\quad+C_{10}(\delta t+h_\disc)\|\nabla_\disc w\|_{L^2(\O)^d}\quad \forall w\in X_{\disc,0}.
\end{aligned}
\end{equation}
Set $w=\delta t^{(n+\frac{1}{2})}E^{(n+1)}$ in \eqref{eq-proof-6-u} and $w=\delta t^{(n+\frac{1}{2})}\widetilde E^{(n+1)}$ in \eqref{eq-proof-6-v}. Summing over $n = 0,...,m-1$ for some $m\in \{1,...,N\}$ yeilds
\begin{equation}\label{eq-proof-7-u}
\begin{aligned}
&\dsp\sum_{n=0}^{m-1} \int_\O \Pi_\disc E^{(n+1)}(\x)\Big(\Pi_\disc E^{(n+1)}(\x)-\Pi_\disc E^{(n)}(\x)\Big) \ud \x\\
&\quad+\underline\lambda_1\dsp\sum_{n=0}^{m-1} \delta t^{(n+\frac{1}{2})} \left\| \nabla_\disc E^{(n+1)} \right\|_{L^2(\O)^d}^2\\
&\leq\dsp\sum_{n=0}^{m-1} \delta t^{(n+\frac{1}{2})}\int_\O \Pi_\disc E^{(n+1)}(\x)\Big(F_1(\bar u^{(n+1)},\bar v^{(n+1)})-F_1(\Pi_\disc u^{(n+1)},\Pi_\disc v^{(n+1)}) \Big) \ud \x\\
&\quad+\dsp\sum_{n=0}^{m-1} \delta t^{(n+\frac{1}{2})}C_9(\delta t+h_\disc)\left\| \nabla_\disc E^{(n+1)} \right\|_{L^2(\O)^d},\mbox{ and }
\end{aligned}
\end{equation}
\begin{equation}\label{eq-proof-7-v}
\begin{aligned}
&\dsp\sum_{n=0}^{m-1} \int_\O \Pi_\disc \widetilde E^{(n+1)}(\x)\Big(\Pi_\disc \widetilde E^{(n+1)}(\x)-\Pi_\disc \widetilde E^{(n)}(\x)\Big) \ud \x\\
&+\underline\lambda_2\dsp\sum_{n=0}^{m-1} \delta t^{(n+\frac{1}{2})} \left\| \nabla_\disc \widetilde E^{(n+1)} \right\|_{L^2(\O)^d}^2\\
&\leq\dsp\sum_{n=0}^{m-1} \delta t^{(n+\frac{1}{2})}\int_\O \Pi_\disc \widetilde E^{(n+1)}(\x)\Big(F_2(\bar u^{(n+1)},\bar v^{(n+1)})-F_1(\Pi_\disc u^{(n+1)},\Pi_\disc v^{(n+1)}) \Big) \ud \x\\
&\quad+\dsp\sum_{n=0}^{m-1} \delta t^{(n+\frac{1}{2})}C_{10}(\delta t+h_\disc)\left\| \nabla_\disc \widetilde E^{(n+1)} \right\|_{L^2(\O)^d}.
\end{aligned}
\end{equation}
Employing the relation $\beta(\beta-\alpha)=\frac{1}{2}\beta^2-\frac{1}{2}\alpha^2+\frac{1}{2}(\beta-\alpha)^2 \geq \frac{1}{2}\beta^2-\frac{1}{2}\alpha^2$ to the first term in the left-hand sides, and applying the Young's inequality with small positive parameters $ab\leq \frac{\varepsilon_1}{2} a^2 + \frac{1}{2\varepsilon_1} b^2$ to the last term in the right-hand sides, we obtain
\begin{equation}\label{eq-proof-8-u}
\begin{aligned}
&\frac{1}{2}\dsp\int_\O(\Pi_\disc E^{(m)}(\x))^2 \ud \x
+\underline\lambda_1\dsp\sum_{n=0}^{m-1} \delta t^{(n+\frac{1}{2})} \left\| \nabla_\disc E^{(n+1)} \right\|_{L^2(\O)^d}^2 \\
&\leq \frac{1}{2}\dsp\int_\O (\Pi_\disc E^{(0)}(\x))^2 \ud \x
+\frac{\varepsilon_1}{2}\dsp\sum_{n=0}^{m-1} \delta t^{(n+\frac{1}{2})} \left\| \nabla_\disc E^{(n+1)} \right\|_{L^2(\O)^d}^2\\
&\quad+\dsp\sum_{n=0}^{m-1} \delta t^{(n+\frac{1}{2})}\int_\O \Pi_\disc E^{(n+1)}(\x)\Big(F_1(\bar u^{(n+1)},\bar v^{(n+1)})-F_1(\Pi_\disc u^{(n+1)},\Pi_\disc v^{(n+1)}) \Big) \ud \x\\
&\quad+\frac{1}{2\varepsilon_1}\dsp\sum_{n=0}^{m-1} \delta t^{(n+\frac{1}{2})} C_9(\delta t+h_\disc)^2,\mbox{ and }
\end{aligned}
\end{equation}
\begin{equation}\label{eq-proof-8-v}
\begin{aligned}
&\frac{1}{2}\dsp\int_\O(\Pi_\disc \widetilde E^{(m)}(\x))^2 \ud \x
+\underline\lambda_2\dsp\sum_{n=0}^{m-1} \delta t^{(n+\frac{1}{2})} \left\| \nabla_\disc \widetilde E^{(n+1)} \right\|_{L^2(\O)^d}^2 \\
&\leq \frac{1}{2}\dsp\int_\O (\Pi_\disc \widetilde E^{(0)}(\x))^2 \ud \x
+\frac{\varepsilon_2}{2}\dsp\sum_{n=0}^{m-1} \delta t^{(n+\frac{1}{2})} \left\| \nabla_\disc \widetilde E^{(n+1)} \right\|_{L^2(\O)^d}^2\\
&\quad+\dsp\sum_{n=0}^{m-1} \delta t^{(n+\frac{1}{2})}\int_\O \Pi_\disc \widetilde E^{(n+1)}(\x)\Big(F_2(\bar u^{(n+1)},\bar v^{(n+1)})-F_1(\Pi_\disc u^{(n+1)},\Pi_\disc v^{(n+1)}) \Big) \ud \x\\
&\quad+\frac{1}{2\varepsilon_2}\dsp\sum_{n=0}^{m-1} \delta t^{(n+\frac{1}{2})} C_{10}(\delta t+h_\disc)^2.
\end{aligned}
\end{equation}
We can evaluate the initial conditions as follows
\begin{equation}\label{eq-proof-9-u}
\begin{aligned}
\| \Pi_\disc E^{(0)} \|_{L^2(\O)} &\leq \| \Pi_\disc P_\disc \bar u(0)-\bar u(0) \|_{L^2(\O)}
+\| \bar u(0)-\Pi_\disc J_\disc\bar u(0) \|_{L^2(\O)}\\
&\leq C_{11}h_\disc + E_\disc^0,\mbox{ and }
\end{aligned}
\end{equation}
\begin{equation}\label{eq-proof-9-v}
\begin{aligned}
\| \Pi_\disc \widetilde E^{(0)} \|_{L^2(\O)} &\leq \| \Pi_\disc P_\disc \bar v(0)-\bar v(0) \|_{L^2(\O)}
+\| \bar v(0)-\Pi_\disc J_\disc\bar v(0) \|_{L^2(\O)}\\
&\leq C_{12}h_\disc + \widetilde E_\disc^0.
\end{aligned}
\end{equation}
Now, we will estimate the third term on the right side of Inequality \eqref{eq-proof-8-u}. By applying the Cauchy–Schwarz inequality and the Lipschitz continuity assumption for $F_1$, we deduce
\begin{equation}\label{eq-proof-F}
\begin{aligned}
\dsp\sum_{n=0}^{m-1} &\delta t^{(n+\frac{1}{2})}\int_\O \Pi_\disc E^{(n+1)}(\x)\Big(F_1(\bar u^{(n+1)},\bar v^{(n+1)})-F_1(\Pi_\disc u^{(n+1)},\Pi_\disc v^{(n+1)}) \Big) \ud \x\\
&\leq \dsp\sum_{n=0}^{m-1} \delta t^{(n+\frac{1}{2})} L\|\Pi_\disc E^{(n+1)}\|_{L^2(\O)}
\|\bar u^{(n+1)}-\Pi_\disc u^{(n+1)}\|_{L^2(\O)}:=\mathbb T_1\\
&\quad+\dsp\sum_{n=0}^{m-1} \delta t^{(n+\frac{1}{2})} L\|\Pi_\disc E^{(n+1)}\|_{L^2(\O)}
\|\bar v^{(n+1)}-\Pi_\disc v^{(n+1)}\|_{L^2(\O)}:=\mathbb T_2.
\end{aligned}
\end{equation}
Now, apply \eqref{PD} to $\bar u(t^{(n+1)})$ and to $\bar v(t^{(n+1)})$ to obtain, thanks to \eqref{hd-1}
\begin{equation}\label{eq-proof-Pi-u}
\begin{aligned}
\Big\| \bar u^{(n+1)}&-\Pi_\disc  P_\disc \bar u(t^{(n+1)}) \Big\|_{L^2(\O)}\\
&\leq \Big\| \bar u^{(n+1)}-\bar u(t^{(n+1)}) \Big\|_{L^2(\O)} + S_\disc(\bar u(t^{(n+1)}))\\
&\leq C_{13}(\delta t+h_\disc),\mbox{ and }
\end{aligned}
\end{equation}
\begin{equation}\label{eq-proof-Pi-v}
\begin{aligned}
\Big\| \bar v^{(n+1)}&-\Pi_\disc  P_\disc \bar v(t^{(n+1)}) \Big\|_{L^2(\O)}\\
&\leq \Big\| \bar v^{(n+1)}-\bar v(t^{(n+1)}) \Big\|_{L^2(\O)} + S_\disc(\bar v(t^{(n+1)}))\\
&\leq C_{14}(\delta t+h_\disc).
\end{aligned}
\end{equation}
By Introducing the term $\Pi_\disc P_\disc \bar u(t^{(n+1)})$, using \eqref{eq-proof-Pi-u} and applying the Young's inequality with a small positive parameter $\varepsilon_3$, one can have
\begin{equation}\label{eq-proof-T1}
\begin{aligned}
\cT_1 \leq &\dsp\sum_{n=0}^{m-1} \delta t^{(n+\frac{1}{2})} L\|\Pi_\disc E^{(n+1)}\|_{L^2(\O)}\times\\
&\Big(
\|\bar u^{(n+1)}-\Pi_\disc P_\disc \bar u(t^{(n+1)})\|_{L^2(\O)}
+\|\Pi_\disc P_\disc \bar u(t^{(n+1)})-\Pi_\disc u^{(n+1)}\|_{L^2(\O)}
\Big)\\
&\leq \dsp\sum_{n=0}^{m-1} \delta t^{(n+\frac{1}{2})} L\|\Pi_\disc E^{(n+1)}\|_{L^2(\O)}
\Big(
C_{13}(\delta t + h_\disc)
+\|\Pi_\disc E^{(n+1)}\|_{L^2(\O)}
\Big)\\
&\leq \frac{L(2+\varepsilon_3)}{2}\dsp\sum_{n=0}^{m-1} \delta t^{(n+\frac{1}{2})}\|\Pi_\disc E^{(n+1)}\|_{L^2(\O)}^2
+\frac{1}{2\varepsilon_3}\dsp\sum_{n=0}^{m-1} \delta t^{(n+\frac{1}{2})}C_{13}(\delta t + h_\disc)^2.
\end{aligned}
\end{equation}
Employing computations similar to those above, we infer, thanks to \eqref{eq-proof-Pi-v}, where $\varepsilon_4,\varepsilon_5$ are small positive parameters linked to the Young's inequality,
\begin{equation}\label{eq-proof-T2}
\begin{aligned}
\cT_2 \leq &\dsp\sum_{n=0}^{m-1} \delta t^{(n+\frac{1}{2})} L\|\Pi_\disc E^{(n+1)}\|_{L^2(\O)}\times\\
&\Big(
\|\bar v^{(n+1)}-\Pi_\disc P_\disc \bar v(t^{(n+1)})\|_{L^2(\O)}
+\|\Pi_\disc P_\disc \bar v(t^{(n+1)})-\Pi_\disc v^{(n+1)}\|_{L^2(\O)}
\Big)\\
&\leq \dsp\sum_{n=0}^{m-1} \delta t^{(n+\frac{1}{2})} L\|\Pi_\disc E^{(n+1)}\|_{L^2(\O)}
\Big(
C_{14}(\delta t + h_\disc)
+\|\Pi_\disc \widetilde E^{(n+1)}\|_{L^2(\O)}
\Big)\\
&\leq \frac{L(\varepsilon_4+\varepsilon_5)}{2}\dsp\sum_{n=0}^{m-1} \delta t^{(n+\frac{1}{2})}\|\Pi_\disc E^{(n+1)}\|_{L^2(\O)}^2
+\frac{1}{2\varepsilon_4}\dsp\sum_{n=0}^{m-1} \delta t^{(n+\frac{1}{2})}C_{14}(\delta t + h_\disc)^2\\
&\quad+\frac{1}{2\varepsilon_5}\dsp\sum_{n=0}^{m-1}\delta t^{(n+\frac{1}{2})}\|\Pi_\disc \widetilde E^{(n+1)}\|_{L^2(\O)}^2.
\end{aligned}
\end{equation}
Collecting \eqref{eq-proof-T1} and \eqref{eq-proof-T2} in \eqref{eq-proof-F}, we obtain
\begin{equation}\label{eq-proof-F-1}
\begin{aligned}
&\dsp\sum_{n=0}^{m-1} \delta t^{(n+\frac{1}{2})}\int_\O \Pi_\disc E^{(n+1)}(\x)\Big(F_1(\bar u^{(n+1)},\bar v^{(n+1)})-F_1(\Pi_\disc u^{(n+1)},\Pi_\disc v^{(n+1)}) \Big) \ud \x\\
&\leq\frac{L(2+\varepsilon_3+\varepsilon_4+\varepsilon_5)}{2}\dsp\sum_{n=0}^{m-1} \delta t^{(n+\frac{1}{2})}\|\Pi_\disc  E^{(n+1)}\|_{L^2(\O)}^2\\
&\quad+\frac{\varepsilon_3+\varepsilon_4}{2\varepsilon_3 \varepsilon_4}\dsp\sum_{n=0}^{m-1} \delta t^{(n+\frac{1}{2})}C_{15}(\delta t + h_\disc)^2
+\frac{1}{2\varepsilon_5}\dsp\sum_{n=0}^{m-1} \delta t^{(n+\frac{1}{2})}\|\Pi_\disc \widetilde E^{(n+1)}\|_{L^2(\O)}^2.
\end{aligned}
\end{equation}
Similarly, we can address the third term on the right-hand side of Inequality \eqref{eq-proof-8-v}. We determine that $\varepsilon_6, \varepsilon_7, \varepsilon_8$ represent small positive parameters associated with the Young's inequality 
\begin{equation}\label{eq-proof-G-1}
\begin{aligned}
&\dsp\sum_{n=0}^{m-1} \delta t^{(n+\frac{1}{2})}\int_\O \Pi_\disc \widetilde E^{(n+1)}(\x)\Big(F_2(\bar u^{(n+1)},\bar v^{(n+1)})-F_2(\Pi_\disc u^{(n+1)},\Pi_\disc v^{(n+1)}) \Big) \ud \x\\
&\leq\frac{L(2+\varepsilon_6+\varepsilon_7+\varepsilon_8)}{2}\dsp\sum_{n=0}^{m-1} \delta t^{(n+\frac{1}{2})}\|\Pi_\disc \widetilde E^{(n+1)}\|_{L^2(\O)}^2\\
&\quad+\frac{\varepsilon_6+\varepsilon_7}{2\varepsilon_6 \varepsilon_7}\dsp\sum_{n=0}^{m-1} \delta t^{(n+\frac{1}{2})}C_{16}(\delta t + h_\disc)^2
+\frac{1}{2\varepsilon_8}\dsp\sum_{n=0}^{m-1} \delta t^{(n+\frac{1}{2})}\|\Pi_\disc  E^{(n+1)}\|_{L^2(\O)}^2.
\end{aligned}
\end{equation}
Therefore, collecting in \eqref{eq-proof-8-u}, the bounds \eqref{eq-proof-9-u} and \eqref{eq-proof-F-1}, and using $\sum_{n=0}^{m-1}\delta t^{(n+\frac{1}{2})}\leq T$, we obtain
\begin{equation}\label{new-E-u}
\begin{aligned}
&\frac{1}{2}\dsp\int_\O (\Pi_\disc  E^{(m)}(\x))^2 \ud \x
+\frac{\underline\lambda_1-\varepsilon_1}{2}\dsp\sum_{n=0}^{m-1} \delta t^{(n+\frac{1}{2})} \left\| \nabla_\disc  E^{(n+1)} \right\|_{L^2(\O)^d}^2 \\
&\leq\frac{L(2+\varepsilon_3+\varepsilon_4+\varepsilon_5)}{2}\dsp\sum_{n=0}^{m-1} \delta t^{(n+\frac{1}{2})}\|\Pi_\disc  E^{(n+1)}\|_{L^2(\O)}^2\\
&\quad+\frac{1}{2\varepsilon_5}\dsp\sum_{n=0}^{m-1} \delta t^{(n+\frac{1}{2})} \|\Pi_\disc \widetilde E^{(n+1)}\|_{L^2(\O)}^2\\
&\quad+(C_{11}h_\disc+ E_\disc^0)^2
+T(\frac{1}{2\varepsilon_1}+\frac{\varepsilon_3+\varepsilon_4}{2\varepsilon_3 \varepsilon_4})C_{17}(\delta t+h_\disc)^2.
\end{aligned}
\end{equation}
Additionally, assembling in \eqref{eq-proof-8-v}, the bounds \eqref{eq-proof-9-v} and \eqref{eq-proof-G-1} give
\begin{equation}\label{new-E-v}
\begin{aligned}
&\frac{1}{2}\dsp\int_\O (\Pi_\disc \widetilde E^{(m)}(\x))^2 \ud \x
+\frac{\underline\lambda_2-\varepsilon_2}{2}\dsp\sum_{n=0}^{m-1} \delta t^{(n+\frac{1}{2})} \left\| \nabla_\disc \widetilde E^{(n+1)} \right\|_{L^2(\O)^d}^2 \\
&\leq
\dsp\frac{L(2+\varepsilon_6+\varepsilon_7+\varepsilon_8)}{2}\sum_{n=0}^{m-1} \delta t^{(n+\frac{1}{2})} \|\Pi_\disc  \widetilde E^{(n+1)}\|_{L^2(\O)}^2\\
&\quad+\frac{1}{2\varepsilon_8}\dsp\sum_{n=0}^{m-1} \delta t^{(n+\frac{1}{2})} \|\Pi_\disc  E^{(n+1)}\|_{L^2(\O)}^2\\
&\quad+(C_{12}h_\disc+ \widetilde E_\disc^0)^2
+T(\frac{1}{2\varepsilon_2}+\frac{\varepsilon_6+\varepsilon_7}{2\varepsilon_6 \varepsilon_7})C_{18}(\delta t+h_\disc)^2.
\end{aligned}
\end{equation}
The direct application of the discrete Gronwall’s Lemma \cite[Lemma 10.5]{26} to the inequalities \eqref{new-E-u} and \eqref{new-E-v} yields
\begin{equation}\label{eq-proof-10-u}
\begin{aligned}
&\frac{1}{2}\dsp\int_\O(\Pi_\disc  E^{(m)}(\x))^2 \ud \x
+\frac{\underline\lambda_1-\varepsilon_1}{2}\dsp\sum_{n=0}^{m-1} \delta t^{(n+\frac{1}{2})} \left\| \nabla_\disc  E^{(n+1)} \right\|_{L^2(\O)^d}^2 \\
&\leq \exp(TM_1)\Big(
\dsp\frac{1}{2\varepsilon_5}\sum_{n=0}^{m-1} \delta t^{(n+\frac{1}{2})}\|\Pi_\disc \widetilde E^{(n+1)}\|_{L^2(\O)}^2\\
&\quad+(C_{11}h_\disc+ E_\disc^0)^2
+T(\frac{1}{2\varepsilon_1}+\frac{\varepsilon_3+\varepsilon_4}{2\varepsilon_3 \varepsilon_4})C_{17}(\delta t+h_\disc)^2
\Big),\mbox{ and }
\end{aligned}
\end{equation}
\begin{equation}\label{eq-proof-10-v}
\begin{aligned}
&\frac{1}{2}\dsp\int_\O(\Pi_\disc \widetilde E^{(m)}(\x))^2 \ud \x
+\frac{\underline\lambda_2-\varepsilon_2}{2}\dsp\sum_{n=0}^{m-1} \delta t^{(n+\frac{1}{2})} \left\| \nabla_\disc \widetilde E^{(n+1)} \right\|_{L^2(\O)^d}^2 \\
&\leq \exp(TM_2)\Big(
\dsp\frac{1}{2\varepsilon_8}\sum_{n=0}^{m-1} \delta t^{(n+\frac{1}{2})}\|\Pi_\disc  E^{(n+1)}\|_{L^2(\O)}^2\\
&\quad+(C_{12}h_\disc+ \widetilde E_\disc^0)^2
+T(\frac{1}{2\varepsilon_2}+\frac{\varepsilon_6+\varepsilon_7}{2\varepsilon_6 \varepsilon_7})C_{18}(\delta t+h_\disc)^2
\Big),
\end{aligned}
\end{equation}
where $M_1:=\frac{L(2+\varepsilon_3+\varepsilon_4+\varepsilon_5)}{2}$ and $M_2:=\frac{L(2+\varepsilon_6+\varepsilon_7+\varepsilon_8)}{2}$. Since we can simplify the terms on the right-hand sides with those on the left-hand sides in both relations, we can express this due to the discrete Poincaré inequality \eqref{ponc-enq}
\begin{equation}\label{eq-proof-12-1}
\begin{aligned}
&\Big(\frac{\underline\lambda_1-\varepsilon_1}{2}-\frac{C_\disc}{2\varepsilon_8}\exp(TM_2)\Big)\dsp\sum_{n=0}^{m-1} \delta t^{(n+\frac{1}{2})} \left\| \nabla_\disc  E^{(n+1)} \right\|_{L^2(\O)^d}^2\\
&\quad+\Big(\frac{\underline\lambda_2-\varepsilon_2}{2}-\frac{C_\disc}{2\varepsilon_5}\exp(TM_1)\Big)\dsp\sum_{n=0}^{m-1} \delta t^{(n+\frac{1}{2})} \left\| \nabla_\disc \widetilde E^{(n+1)} \right\|_{L^2(\O)^d}^2\\
&\leq C_{19}(\delta t+h_\disc)^2 
+C_{20}(h_\disc+ E_\disc^0)^2
+C_{21}(h_\disc+ \widetilde E_\disc^0)^2:=\mathbb T_3\mbox{ and }
\end{aligned}
\end{equation}
\begin{equation}\label{eq-proof-12-2}
\begin{aligned}
\frac{1}{2}\|\Pi_\disc  E^{(m)}\|_{L^2(\O)}^2
+\frac{1}{2}\|\Pi_\disc  \widetilde E^{(m)}\|_{L^2(\O)}^2
\leq \mathbb T_3.
\end{aligned}
\end{equation}
From the triangle inequality, \eqref{new-eq-proof-100}, and \eqref{eq-proof-12-2} combined with the power-of-sums inequality $(\alpha+\beta)^{1/2}\leq \alpha^{1/2}+\beta^{1/2}$, we attain for all $m\in\{1,...,N\}$, thanks to the relation \eqref{hd-1}
\begin{equation}\label{eq-proof-11-u}
\begin{aligned}
\|\Pi_\disc u^{(m)}-\bar u(t^{(m)})\|_{L^2(\O)}
&\leq \|\Pi_\disc E^{(m)}\|_{L^2(\O)}
+\|\Pi_\disc P_\disc\bar u(t^{(m)})-\bar u(t^{(m)})\|_{L^2(\O)}\\
&\leq 
\mathbb T_3+\sqrt{2}S_\disc(\bar u(t^{(m)}))\\
&\leq C_{22}(\delta t+h_\disc+E_\disc^0 + \widetilde E_\disc^0),\mbox{ and } 
\end{aligned}
\end{equation}
\begin{equation}\label{eq-proof-11-v}
\begin{aligned}
\|\Pi_\disc v^{(m)}-\bar v(t^{(m)})\|_{L^2(\O)}
&\leq \|\Pi_\disc \widetilde E^{(m)}\|_{L^2(\O)}
+\|\Pi_\disc P_\disc\bar v(t^{(m)})-\bar v(t^{(m)})\|_{L^2(\O)}\\
&\leq 
\mathbb T_3+\sqrt{2}S_\disc(\bar v(t^{(m)}))\\
&\leq C_{22}(\delta t+h_\disc+E_\disc^0 + \widetilde E_\disc^0)
\end{aligned}
\end{equation}
Whereas using again triangle the inequality, \eqref{new-eq-proof-100}, \eqref{eq-proof-12-1} with $m=N-1$, and the power-of-sums inequality $(\alpha+\beta)^{2}\leq 2\alpha^2+2\beta^2$ leads to owing to the relation \eqref{hd-1} 
\begin{equation}\label{eq-proof-13-u}
\begin{aligned}
&\dsp\sum_{n=0}^{N-1}\delta t^{(n+\frac{1}{2})}\|\nabla_\disc u^{(n+1)} - \nabla\bar u(t^{(n+1)})\|_{L^2(\O)^d}\\
&\leq \dsp\sum_{n=0}^{N-1}\delta t^{(n+\frac{1}{2})}\|\nabla_\disc E^{(n+1)}\|_{L^2(\O)^d}
+\dsp\sum_{n=0}^{N-1}\delta t^{(n+\frac{1}{2})}\|\nabla_\disc P_\disc\bar u(t^{(n+1)})-\nabla\bar u(t^{(n+1)})\|_{L^2(\O)^d}\\
&\leq 
C_{23}\mathbb T_3+\dsp\sum_{n=0}^{N-1}\delta t^{(n+\frac{1}{2})}S_\disc(\bar u(t^{(n+1)}))\\
&\leq C_{24}(\delta t+h_\disc+E_\disc^0 + \widetilde E_\disc^0),\mbox{ and }
\end{aligned}
\end{equation}
\begin{equation}\label{eq-proof-13-v}
\begin{aligned}
&\dsp\sum_{n=0}^{N-1}\delta t^{(n+\frac{1}{2})}\|\nabla_\disc v^{(n+1)} - \nabla\bar v(t^{(n+1)})\|_{L^2(\O)^d}\\
&\leq \dsp\sum_{n=0}^{N-1}\delta t^{(n+\frac{1}{2})}\|\nabla_\disc \widetilde E^{(n+1)}\|_{L^2(\O)^d}
+\dsp\sum_{n=0}^{N-1}\delta t^{(n+\frac{1}{2})}\|\nabla_\disc P_\disc\bar v(t^{(n+1)})-\nabla\bar v(t^{(n+1)})\|_{L^2(\O)^d}\\
&\leq 
C_{25}\mathbb T_3+\dsp\sum_{n=0}^{N-1}\delta t^{(n+\frac{1}{2})}S_\disc(\bar v(t^{(n+1)}))\\
&\leq C_{26}(\delta t+h_\disc+E_\disc^0 + \widetilde E_\disc^0).
\end{aligned}
\end{equation}
The conclusion is derived from the tringular inequality, \eqref{eq-proof-11-u}, \eqref{eq-proof-11-v}, \eqref{eq-proof-13-u}, \eqref{eq-proof-13-v}, and the Lipschtiz-continuity of the solutions $\bar u,\bar v:[0,T]\to H^1(\O)$. 
\end{proof}

\begin{remark}
Note that Estimates \eqref{eq-error} provide convergence rates for both conforming and non-conforming methods, while the estimates in \cite{7,15} seem to apply solely to a specific scheme and particular choices of reaction functions. Furthermore, it appears that we initially provided a general error estimate for approximating the general reaction-diffusion system.

From Estimates \eqref{eq-error}, we can conclude the order of convergence in terms of time and mesh sizes owing to the connection between the space size $h_\disc$ and the mesh size ($h_\mesh$) for mesh-based gradient discretisations. For additional details, refer to \cite[Remark 2.24]{21}.

For parabolic problems, obtaining error estimates typically requires additional regularity on solutions, such as the solution belongs to the space $W^{1,\infty}(0,T;W^{2,\infty}(\O))$ or $C^2([0,T];H^2(\O))$, see \cite{22,24,29} for instance. 


\end{remark}

\section{Numerical Example}\label{sec-num}
To illustrate our convergence rates, we introduce here a numerical example based on a polytopal method called the hybrid mimetic mixed (HMM) method \cite{25}. We point out that the gradient scheme \eqref{rm-disc-pblm} with a particular choice of the space-time gradient discretisation yields the HMM scheme for the problem \eqref{rm-weak}, see \cite[Section 5]{6} for a full presentation of the HMM method for SRDM. For all standard low-order gradient discretisations (which include the HMM method), Theorem \ref{thm-err-rm} provides an $\mathcal O(h_\mesh)$ rate of convergence for the functions and the gradients.  

We consider here \cite[Example 1]{14}. Letting $\O=(0,1)^2$ and $T=1$, we solve Problem \eqref{eq-rm} with tensors $\Lambda_1=\mathbb A \mathbb D_1 \mathbb A^{T\rm }$ and $\Lambda_2=\mathbb B \mathbb D_2 \mathbb B^{T\rm }$, where
\[
\mathbb A=
\begin{pmatrix}
  \cos \theta_1 & -\sin \theta_1\\
  \sin\theta_1 & \cos\theta_1 
 \end{pmatrix},\quad
 \mathbb B=
\begin{pmatrix}
  \cos \theta_2 & -\sin \theta_2\\
  \sin\theta_2 & \cos\theta_2 
 \end{pmatrix},\quad
\]
and
\[
\mathbb D_1=
\begin{pmatrix}
  g_1 & 0\\
  0 & g_2 
 \end{pmatrix},\quad
 \mathbb D_2=
\begin{pmatrix}
  g_2 & 0\\
  0 & g_1 
 \end{pmatrix}.
\]
We take $g_1(x,y)=1+2x^2+y^2$, $g_2(x,y)=1+x^2+2y^2$, $\theta_1=\frac{5\pi}{12}$ and $\theta_2=\frac{\pi}{3}$. The reaction terms are defined by
\begin{equation*}
\begin{aligned}
F_1(\bar u,\bar v)&=\bar u(1-\bar u)(\bar u-0.1),\\ 
F_2(\bar u,\bar v)&=\bar u-\bar v.
\end{aligned}
\end{equation*}
The exact solution is given by
\begin{equation*}
\begin{aligned}
u(x,y,t)&=e^{-t}\sin(\pi x) \sin(\pi y),\\
v(x,y,t)&=e^{-t}\cos(\pi x) \cos(\pi y).
\end{aligned}
\end{equation*}
Note that the problem solved here is subjected to the non-homogenous Dirichlet boundary conditions and additional source functions on the right-hand side that can be attained from the exact solution.

\begin{figure}[ht]\label{fig-mesh}
	\begin{center}
	\begin{tabular}{cc}
	\includegraphics[width=0.40\linewidth]{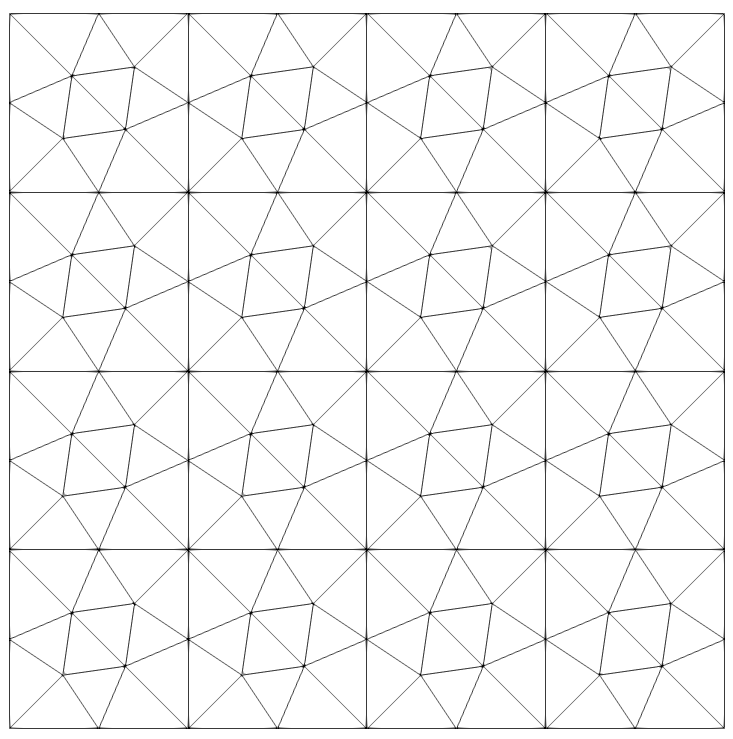} & \includegraphics[width=0.40\linewidth]{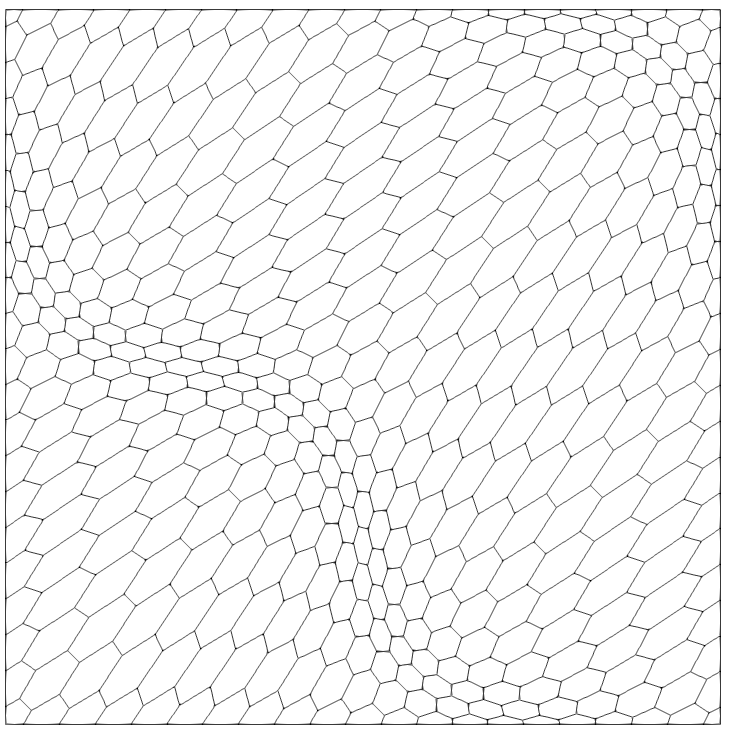}\\
	 \texttt{Triangular}  & \texttt{Hexagonal}
	\end{tabular}
	\end{center}
	\caption{Sample of the polygonal meshes.}
	\label{fig-1}
\end{figure}

For our simulation, we use two distinct families of polygonal meshes as in Figure \ref{fig-1}. The relative errors for $\bar u$, $\bar v$, $\nabla\bar u$, and $\nabla\bar v$ are summarised in Table \ref{tab-1}, along with their corresponding order of convergence based on the mesh size $h$. The results demonstrate that the HMM scheme achieves super-convergence order for the $L^2$ errors of the solutions. As a polytopal scheme, it performs effectively on hexagonal meshes. 

Figures \ref{fig-1}, \ref{fig-2} present convergence graphs corresponding to the solutions and their gradients obtained with the use of hexagonal mesh. We observe that the errors reduce linearly (in fact, greater than first-order) with respect to the mesh size $h$, which aligns with the expected rate of convergence in spatial space outlined in Theorem \ref{thm-err-rm}.

\begin{table}[]
\begin{tabular}{c c c c c}
\hline
$h_\mesh$&
$0.125000000$&
$0.062500000$&
$0.031250000$&
$0.015625000$  
\\ \hline
$\frac{\| \bar u(\cdot,t^{(n)}) - \Pi_\disc u^{(n)}\|_{L^{2}(\O)}}{\| \bar u(\cdot,t^{(n)})\|_{L^{2}(\O)}}$&
$0.004738288$&
$0.0013162825$&
$0.0004082468$&
$0.0001064975$\\ 
Rate&
$1.85$&
$1.69$&
1.94&
--
\\ 
$\frac{\| \bar v(\cdot,t^{(n)}) - \Pi_\disc v^{(n)}\|_{L^{2}(\O)}}{\| \bar v(\cdot,t^{(n)})\|_{L^{2}(\O)}}$&
$0.005458805$&
$0.001343458$&
$0.000356627$&
$0.000105361$\\ 
Rate&
$2.02$&
$1.91$&
$1.76$&
--
\\
$\frac{\| \nabla\bar u(\cdot,t^{(n)}) - \nabla_\disc u^{(n)}\|_{L^{2}(\O^2)}}{\| \nabla\bar u(\cdot,t^{(n)})\|_{L^{2}(\O)^2}}$&
$0.069660264$&
$0.035359614$&
$0.0176641949$&
$0.0088303991$\\ 
Rate&
$0.98$&
1.00&
1.00&
--
\\
$\frac{\| \nabla\bar v(\cdot,t^{(n)}) - \nabla_\disc v^{(n)}\|_{L^{2}(\O)^2}}{\| \nabla\bar v(\cdot,t^{(n)})\|_{L^{2}(\O)^2}}$&
$0.076321506$&
$0.035358391$&
$0.0176643386$&
$0.008830754$\\ 

Rate&
$1.11$&
$1.00$&
$1.00$&
--
\\ 
\hline
\end{tabular}
\caption{The errors on triangular meshes.}
\label{tab-1}   
\end{table}

\begin{figure}[ht]
	\begin{center}
	\includegraphics[scale=0.4]{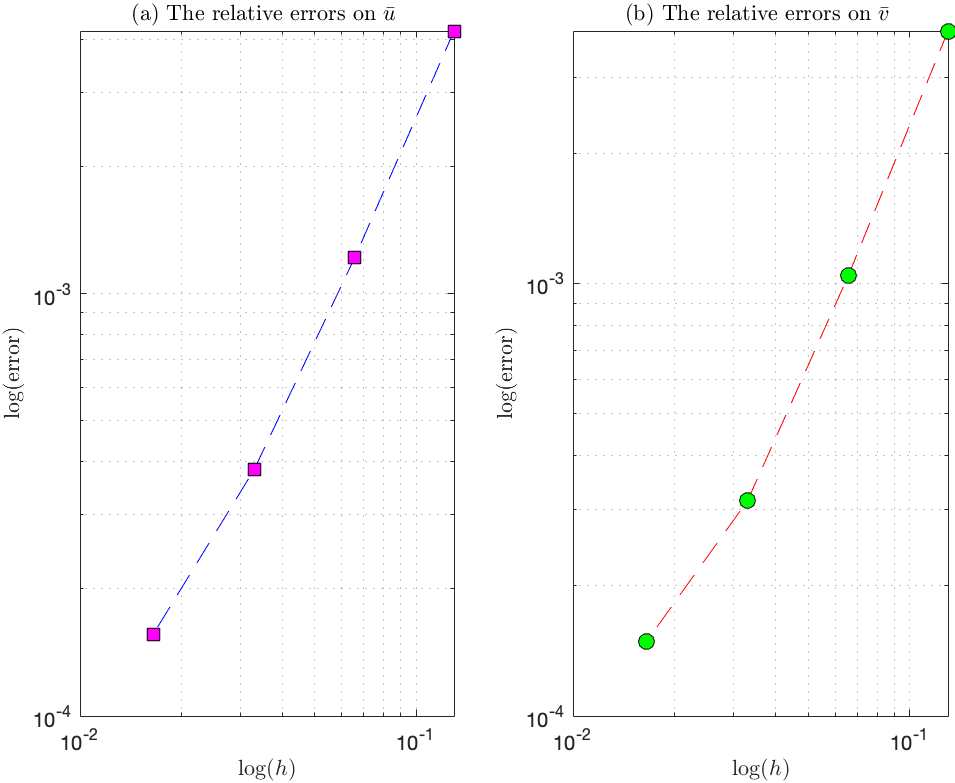}
	\end{center}
\caption{The errors on hexagonal meshes.}	
\label{fig-1}
\end{figure}

\begin{figure}[ht]
	\begin{center}
	\includegraphics[scale=0.4]{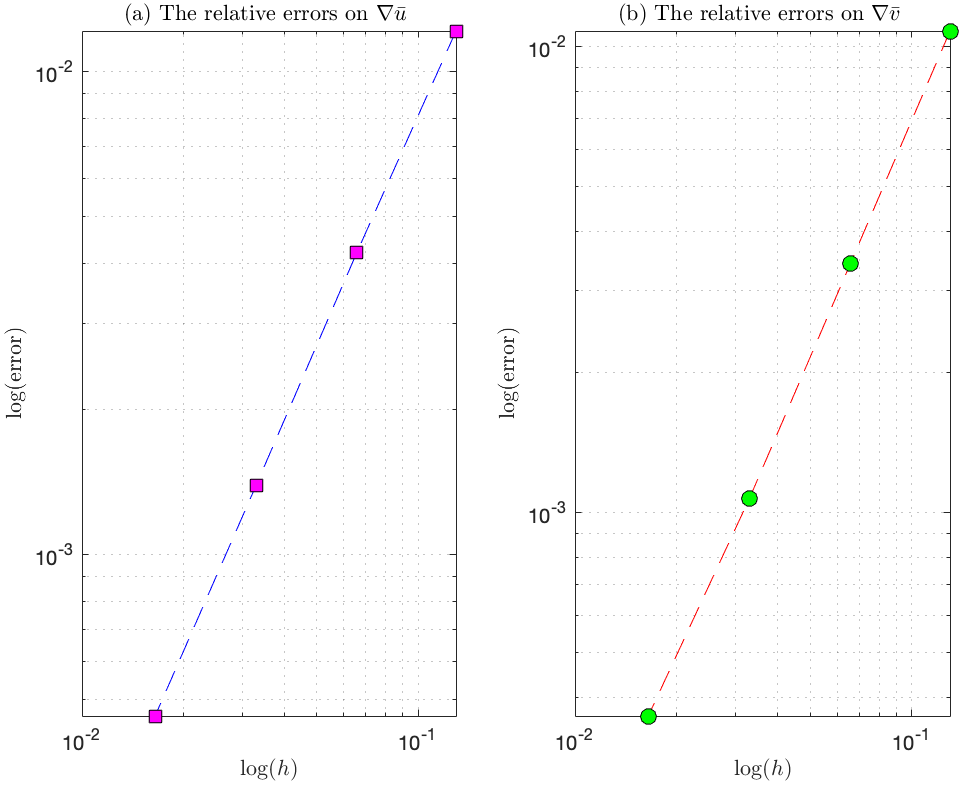}
	\end{center}
\caption{The errors on hexagonal meshes.}	
\label{fig-2}
\end{figure}


\bibliographystyle{siam}
\bibliography{RDM-ref}

\begin{thebibliography}{10}

\bibitem{29}
{\sc Y.~Alnashri}, {\em A general error estimate for parabolic variational
  inequalities}, Computational Methods in Applied Mathematics, 22 (2022),
  pp.~245--258.

\bibitem{6}
{\sc Y.~Alnashri and H.~Alzubaidi}, {\em The gradient discretisation method for
  the chemical reactions of biochemical systems}, Arab Journal of Mathematical
  Sciences, 30 (2024), pp.~67--80.

\bibitem{5}
{\sc W.-T. Ang}, {\em The two-dimensional reaction-diffusion brusselator
  system: a dual-reciprocity boundary element solution}, Engineering Analysis
  with Boundary Elements, 7 (2003), pp.~897–--903.

\bibitem{24}
{\sc A.~Bradji}, {\em An analysis for the convergence order of gradient schemes
  for semilinear parabolic equations}, Computers \& Mathematics with
  Applications, 72 (2016), pp.~1287--1304.

\bibitem{10}
{\sc M.~A. Chaplain, M.~Ganesh, and I.~G. Graham}, {\em Spatio-temporal pattern
  formation on spherical surfaces: numerical simulation and application to
  solid tumour growth}, Journal of mathematical biology, 42 (2001),
  pp.~387--423.

\bibitem{8}
{\sc D.~Conte, G.~Pagano, and B.~Paternoster}, {\em Nonstandard finite
  differences numerical methods for a vegetation reaction–diffusion model},
  Journal of Computational and Applied Mathematics, 419 (2023), p.~114790.

\bibitem{19}
{\sc J.~Diaz and I.~Vrabie}, {\em Existence for reaction diffusion systems. a
  compactness method approach}, Journal of mathematical analysis and
  applications, 188 (1994), pp.~521--540.

\bibitem{16}
{\sc F.~Digne and J.~Michel}, {\em Representations of finite groups of Lie
  type}, vol.~95, Cambridge University Press, 2020.

\bibitem{22}
{\sc J.~Droniou, R.~Eymard, T.~Gallou{\"e}t, C.~Guichard, and R.~Herbin}, {\em
  An error estimate for the approximation of linear parabolic equations by the
  gradient discretization method}, in Finite Volumes for Complex Applications
  VIII-Methods and Theoretical Aspects: FVCA 8, Lille, France, June 2017 8,
  Springer, 2017, pp.~371--379.

\bibitem{21}
{\sc J.~Droniou, R.~Eymard, T.~Gallou{\"e}t, C.~Guichard, and R.~Herbin}, {\em
  The gradient discretisation method}, vol.~82, Springer, 2018.

\bibitem{25}
{\sc J.~DRONIOU, R.~EYMARD, T.~GALLOU\"{E}T, and R.~HERBIN}, {\em A unified
  approach to mimetic finite difference, hybrid finite volume and mixed finite
  volume methods}, Mathematical Models and Methods in Applied Sciences, 20
  (2010), pp.~265--295.

\bibitem{11}
{\sc C.~M. Elliott and T.~Ranner}, {\em Evolving surface finite element method
  for the cahn--hilliard equation}, Numerische Mathematik, 129 (2015),
  pp.~483--534.

\bibitem{18}
{\sc P.~C. Fife}, {\em Mathematical aspects of reacting and diffusing systems},
  vol.~28, Springer Science \& Business Media, 2013.

\bibitem{7}
{\sc M.~Frittelli, A.~Madzvamuse, I.~Sgura, and C.~Venkataraman}, {\em
  Preserving invariance properties of reaction-diffusion systems on stationary
  surfaces}, IMA Journal of Numerical Analysis, 39 (2019), pp.~235--270.

\bibitem{2}
{\sc P.~Gray and S.~K. Scott}, {\em Autocatalytic reactions in the isothermal,
  continuous stirred tank reactor: Oscillations and instabilities in the system
  a + 2b → 3b, b → c}, Chemical Engineering Science, 39 (1984),
  pp.~1087--1097.

\bibitem{9}
{\sc J.~W. Jerome}, {\em Fully discrete stability and invariant rectangular
  regions for reaction-diffusion systems}, SIAM Journal on Numerical Analysis,
  21 (1984), pp.~1054--1065.

\bibitem{1}
{\sc Y.~R. Lee and S.-S. Liaw}, {\em A diffusion-reaction model to reproduce
  the suitable environment for stem cells in tissue}, Chinese journal of
  Physics, 52 (2014), pp.~927--933.

\bibitem{28}
{\sc M.~Meerschaert}, {\em Mathematical modeling}, Academic press, 2013.

\bibitem{17}
{\sc D.~Olmos and B.~D. Shizgal}, {\em Pseudospectral method of solution of the
  fitzhugh–nagumo equation}, Mathematics and Computers in Simulation, 79
  (2009), pp.~2258--2278.

\bibitem{27}
{\sc I.~Prigogine and R.~Lefever}, {\em Symmetry breaking instabilities in
  dissipative systems. {II}}, The Journal of Chemical Physics, 48 (1968),
  pp.~1695--1700.

\bibitem{13}
{\sc J.~{\v{S}}embera and M.~Bene{\v{s}}}, {\em Nonlinear galerkin method for
  reaction--diffusion systems admitting invariant regions}, Journal of
  computational and applied mathematics, 136 (2001), pp.~163--176.

\bibitem{20}
{\sc J.~Smoller}, {\em Shock waves and reaction—diffusion equations},
  vol.~258, Springer Science \& Business Media, 2012.

\bibitem{26}
{\sc V.~Thom{\'e}e}, {\em Galerkin finite element methods for parabolic
  problems}, vol.~25, Springer Science \& Business Media, 2007.

\bibitem{4}
{\sc J.~Tyson}, {\em Some further studies of non-linear oscillations in
  chemical systems}, Journal of chemical Physics, 58 (1973), pp.~3919--3930.

\bibitem{12}
{\sc H.~Wei, M.~Xu, W.~Si, and K.~Jiang}, {\em A finite element method of the
  self-consistent field theory on general curved surfaces}, Journal of
  Computational Physics, 387 (2019), pp.~230--244.

\bibitem{15}
{\sc H.~Zhou, Z.~Sheng, and G.~Yuan}, {\em A finite volume scheme preserving
  the invariant region property for the coupled system of fitzhugh-nagumo
  equations on distorted meshes}, Computers \& Mathematics with Applications,
  117 (2022), pp.~39--52.

\bibitem{14}
{\sc Y.~Zhou, HuifangSun and F.~Huo}, {\em A finite volume method preserving
  the invariant region property for the quasimonotone reaction-diffusion
  systems}, International Journal of Numerical Analysis and Modeling, 21
  (2024), pp.~910--932.

\end{thebibliography}


\section*{Declarations}
\begin{itemize}
\item Funding: Funding information is not applicable.
\item Conflict of interest: The author declares that he has no competing interests.
\item Data availability: The manuscript has no associated data. 
\item Author contribution: The paper has only one author.
\end{itemize}

\end{document}


\[
\|\nabla_\disc u^{(n+1)}\|_{L^2(\O)^d} \leq C\Big( \|F_1\|_{L^2(\O)} + \|\Pi_\disc u^{(0)}\|_{L^2(\O)} \Big),
\]
\[
\|\nabla_\disc v^{(n+1)}\|_{L^2(\O)^d} \leq C\Big( \|F_2\|_{L^2(\O)} + \|\Pi_\disc v^{(0)}\|_{L^2(\O)} \Big),
\]
where $C$ is independent on uknowns $u^{(n+1)}$ and $v^{(n+1)}$. Since $\|\nabla_\disc \cdot\|_{L^2(\O)^d}$ is a norm on $X_{\disc,0}$, the above bounds show that the matrix assocciated with linear square system coming from \eqref{ex-u-1} and \eqref{ex-v-1} has a zero kernel, and it is invertibale. One defines $G(w)=(u,v)$, so that $G:X_{\disc,0}^2\to X_{\disc,0}^2$ is a mapping. The contniuty of $G$ and the Brouwer fixed point theorem prove that there exists at least one solution satisying $G((u,v))=(u,v)$. This shows that the problem \eqref{rm-disc-pblm} has at least one solution.

First, we prove the uniquness, let $(u_1,v_1), (u_2,v_2) \in X_{\disc,0}$ be solution to \eqref{}. Take $\varphi:=u_1^{(n+1)}-u_2^{(n+1)}$ and $\psi:=v_1^{(n+1)}-v_2^{(n+1)}$, we have
\begin{equation}\label{uniq-u-1}
\begin{aligned}
&\dsp\int_\O \Big(\Pi_\disc W^{(n+1)}(\x)-\Pi_\disc W^{(n)}(\x)\Big) \Pi_\disc W^{(n+1)}(\x) \ud \x
+\dsp\dsp\int_{t^{(n)}}^{t^{(n+1)}}\int_\O |\nabla_\disc W^{(n+1)}(\x)|^2 \ud \x \ud t\\
&\quad= \delta t^{(n+\frac{1}{2})}\dsp\int_\O \Big( F_1(\Pi_\disc u_1^{(n+1)},\Pi_\disc v_1^{(n+1)}) - F_1(\Pi_\disc u_2^{(n+1)},\Pi_\disc v_2^{(n+1)}) \Big) \Pi_\disc W^{(n+1)} \ud \x.
\end{aligned}
\end{equation}
Applying shwrz and liptichitz and Young
\begin{equation}\label{uniq-u-2}
\begin{aligned}
&\dsp\int_\O \Big(\Pi_\disc W^{(n+1)}(\x)-\Pi_\disc W^{(n)}(\x)\Big) \Pi_\disc W^{(n+1)}(\x) \ud \x
+\dsp\dsp\int_{t^{(n)}}^{t^{(n+1)}}\int_\O |\nabla_\disc W^{(n+1)}(\x)|^2 \ud \x \ud t\\
&\quad\leq L\|\Pi_\disc W^{(n+1)}\|_{L^2(\O)}^2
+L\|\Pi_\disc \tilde W^{(n+1)}\|_{L^2(\O)}^2
\end{aligned}
\end{equation}
Applying the fact that $(a-b)b=$ and Summing the above inequalities over
\begin{equation}\label{uniq-u-3}
\begin{aligned}
&\dsp\frac{1}{2}\Big( \|\Pi_\disc W^{(n+1)}\|_{L^2(\O)}^2 - \|\Pi_\disc W^{(0)}\|_{L^2(\O)}^2 \Big)
+\dsp\sum \|\nabla_\disc W^{(n+1)}\|_{L^1(\O)^d}^2\\
&\quad\leq \sum\Big( L\|\Pi_\disc W^{(n+1)}\|_{L^2(\O)}^2
+L\|\Pi_\disc \tilde W^{(n+1)}\|_{L^2(\O)}^2 \Big)
\end{aligned}
\end{equation}
Applying gronwall inequality
\begin{equation}\label{uniq-u-4}
\begin{aligned}
&\dsp\frac{1}{2}\Big( \|\Pi_\disc W^{(n+1)}\|_{L^2(\O)}^2 - \|\Pi_\disc W^{(0)}\|_{L^2(\O)}^2 \Big)
+\dsp\sum \|\nabla_\disc W^{(n+1)}\|_{L^1(\O)^d}^2\\
&\quad\leq \sum\Big( L\|\Pi_\disc W^{(n+1)}\|_{L^2(\O)}^2
+L\|\Pi_\disc \tilde W^{(n+1)}\|_{L^2(\O)}^2 \Big)
\end{aligned}
\end{equation}

Asimilar way we get
\begin{equation}\label{uniq-v-3}
\begin{aligned}
&\dsp\frac{1}{2}\Big( \|\Pi_\disc \tilde W^{(n+1)}\|_{L^2(\O)}^2 - \|\Pi_\disc \tilde W^{(0)}\|_{L^2(\O)}^2 \Big)
+\dsp\sum \|\nabla_\disc \tilde W^{(n+1)}\|_{L^1(\O)^d}^2\\
&\quad\leq \sum\Big( L\|\Pi_\disc \tilde W^{(n+1)}\|_{L^2(\O)}^2
+L\|\Pi_\disc W^{(n+1)}\|_{L^2(\O)}^2 \Big)
\end{aligned}
\end{equation}

The convergence rates in terms of mesh size (denoted by h) is linked to the the above three indicators.  \cite[Remark 2.24]{} provides us with the following relations:
\[
W_\disc(\bo)\leq h \| \bo\|_{H^1(\O)},\quad \forall \bo \in H^1(\O)^d,
\]
\[
\min_{v_\disc\in X_{\disc,0}}S_\disc(\varphi,v_\disc) \leq h \| \varphi\|_{H^2(\O)}. \quad \forall \varphi\in H^2(\O).
\]

Let $\tilde \nabla_\disc:X_{\disc,0}^2 \to L^2(\O)^d\times L^2(\O)^d$ be a generlised of reconstrected gradient defined by
\[
\tilde\nabla_\disc(w_1,w_2)=(\nabla_\disc w_1,\nabla_\disc w),\quad \forall w_1,w_2 \in X_{\disc,0}^2.
\]

the weak solution of \eqref{rm-strong1}--\eqref{rm-strong5} is seeking,
\begin{subequations}\label{rm-weak}
\begin{equation*}
\begin{aligned}
&\bar u,\bar v \in L^2(0,T;H_0^1(\O))\cap C([0,T];L^2(\O)),\; \dr_t\bar u,\dr_t\bar v \in L^2(0,T;H^{-1}(\O)),\\
&\bar u(\cdot,0)=u_{\rm ini},\;\bar v(\cdot,0)=v_{\rm ini}, \mbox{ and for all } \varphi,\psi \in L^2(0,T;H_0^1(\O)),
\end{aligned}
\end{equation*}
\begin{equation}\label{rm-weak1}
\begin{aligned}
\dsp\int_0^T \langle \dr_t\bar u(\cdot,t),\varphi(\cdot,t) \rangle \ud t 
&+\dsp\int_0^T\int_\O \Lambda_1(\x)\nabla \bar u(\x,t) \cdot \nabla \varphi(\x,t)\ud \x \ud t\\
{}&\quad= \dsp\int_0^T\dsp\int_\O F_1(\bar u,\bar v)\varphi(\x,t) \ud \x \ud t,
\end{aligned}
\end{equation}
\begin{equation}\label{rm-weak2}
\begin{aligned}
\dsp\int_0^T \langle \dr_t\bar v(\cdot,t),\psi(\cdot,t) \rangle \ud t 
&+\dsp\int_0^T\int_\O \Lambda_2(\x)\nabla \bar v(\x,t) \cdot \nabla \psi(\x,t)\ud \x \ud t\\
{}&\quad= \dsp\int_0^T\dsp\int_\O F_2(\bar u,\bar v)\psi(\x,t) \ud \x \ud t,
\end{aligned}
\end{equation}
\end{subequations}

We will mainly use some knowen inequlities relations which we recall here for the sake of completeness:
\begin{itemize}
\item Young's inequality with small positive parameters
\begin{equation}
ab\leq \dsp\frac{\varepsilon}{2} a^2 + \dsp\frac{1}{2\varepsilon} b^2, \quad \forall a,b \in \RR.
\end{equation}
\item For all $a,b\in \RR$,\; $b(a-b)=\frac{1}{2}b^2-\frac{1}{2}a^2+\frac{1}{2}(b-a)^2 \geq \frac{1}{2}b^2-\frac{1}{2}a^2$.
\item The power-of-sums inequality
\begin{equation}
(a+b)^{1/2}\leq a^{1/2}+b^{1/2}, \quad \forall a,b \in \RR.
\end{equation}
\item The power-of-sums inequality
\begin{equation}
(a+b)^2\leq 2(a^2+b^2), \quad \forall a,b \in \RR.
\end{equation}

\end{itemize}

\begin{subequations}\label{eq-error-1}
\begin{align}
\max_{t\in[0,T]}\Big\| \Pi_\disc u(\cdot,t)-\bar u(\cdot,t) \Big\|_{L^2(\O)}
\leq C(\delta t+h_\disc+E_\disc^0+\widetilde E_\disc^0),\\
\max_{t\in[0,T]}\Big\| \Pi_\disc v(\cdot,t)-\bar v(\cdot,t) \Big\|_{L^2(\O)}
\leq C(\delta t+h_\disc+E_\disc^0+\widetilde E_\disc^0),\end{align}
\end{subequations}
\begin{subequations}\label{eq-error-2}
\begin{align}
\Big\| \nabla_\disc u-\nabla\bar u \Big\|_{L^2(\O\times(0,T))^d}
\leq C(\delta t+h_\disc+E_\disc^0+\widetilde E_\disc^0),\\
\Big\| \nabla_\disc v-\nabla\bar v \Big\|_{L^2(\O\times(0,T))^d}
\leq C(\delta t+h_\disc+E_\disc^0+\widetilde E_\disc^0).
\end{align}
\end{subequations}
